\theoremstyle{plain}
\numberwithin{equation}{section}
\newtheorem{thm}{Theorem}[section]
\newtheorem*{thm*}{Theorem}
 \newtheorem{lem}[thm]{Lemma}
 \newtheorem{prop}[thm]{Proposition}
  \newtheorem{Assumption}{Assumption}
 \theoremstyle{definition}
 \newtheorem{defn}[thm]{Definition}
 \theoremstyle{remark}
 \newtheorem{rem}[thm]{Remark}
 \newtheorem{ex}[thm]{Example}
 \numberwithin{equation}{section}
\newcommand\new[1]{}
\newcommand\norm[1]{\left|\!\left| #1\right|\!\right|}
\newcommand{\embed}{\hookrightarrow}
\newcommand\BIP{\operatorname{BIP}}
\newcommand\Tr{\operatorname{Tr}}
\newcommand\range{\operatorname{Range}}
\newcommand\Vertt{|\!|}
\newcommand{\mC}{\mathcal{C}}
\newcommand{\N}{\mathcal{N}}
\def\R{{\mathds R}}
\def\Re{\mbox{Re\,}}
\newcommand{\Exp}{\mathds{E}}
\newcommand{\T}{\mathcal{T}}
\newcommand{\Prob}{\mathds{P}}
\newcommand{\Lin}{\mathcal{L}}
\newcommand{\abs}[1]{\left\vert#1\right\vert}
\newcommand{\set}[1]{\left\{#1\right\}}
\newcommand{\seq}[1]{\left<#1\right>}
\newcommand{\bE}{\mathbf{E}}
\newcommand{\bH}{\mathbf{H}}
\newcommand{\cH}{\mathcal{H}}
\newcommand{\B}{\mathcal{B}}
\newcommand{\A}{\mathcal{A}}
\newcommand{\Fil}{\mathcal{F}}
\title{\Large Backward Ornstein-Uhlenbeck transition operators and mild solutions of non-autonomous Hamilton-Jacobi equations in Banach spaces}
\author{Rafael Serrano \thanks{rafael.serrano@urosario.edu.co}}
\affil{\smallskip\small \textsc{Universidad del Rosario}\\
Calle 12C No. 4-69\\
Bogot\'a, Colombia}
\begin{document}
\bibliographystyle{alpha}



\date{\today}

\maketitle

\begin{abstract}
\noindent In this paper we revisit the mild-solution approach to second-order semi-linear PDEs of Hamilton-Jacobi type in infinite-dimensional spaces. We show that a well-known result on existence of mild solutions in Hilbert spaces can be easily extended to non-autonomous Hamilton-Jacobi equations in Banach spaces. The main tool is the regularizing property of Ornstein-Uhlenbeck transition evolution operators for stochastic Cauchy problems in Banach spaces with time-dependent coefficients.
\end{abstract}


\section{Introduction}
Let $\bE$ be a real Banach space and let $T>0$ be fixed. The object of this paper is to study the existence of a \emph{mild solution} $V:[0,T]\times\bE\to\R$ to the following final-value problem for the non-autonomous semi-linear \emph{Hamilton-Jacobi} partial differential equation (HJ-PDE) on $[0,T]\times\bE,$
\begin{equation}\label{hjb0}
\begin{split}
\frac{\partial V}{\partial t}(t,x)+L_tV(t,\cdot)(x)+\cH(t,x,D_xV(t,x))&=0, \ \ \ (t,x)\in [0,T]\times\bE\\
V(T,x)&=\varphi(x).
\end{split}
\end{equation}
The final condition $\varphi:\bE\to\R$ and the nonlinear \emph{Hamiltonian} operator $\cH:[0,T]\times\bE\times\bE^*\to\R$ are given, and for each $t\in[0,T],$ $L_t$ is the second-order differential operator
\[
(L_t\phi)(x):=\seq{-A(t)x,D_x \phi(x)}+\frac{1}{2}\Tr_\bH[G(t)^*D_x^2\phi(x)G(t)], \ x\in D(A(t)).\]
Here $\seq{\cdot,\cdot}$ denotes the duality pairing between $\bE$ and its dual $\bE^*,$ \linebreak $\set{-A(t)}_{t\in[0,T]}$ is a family of densely defined closed linear operators generating a parabolic evolution family on $\bE,$ $\set{G(t)}_{t\in[0,T]}$ is a family of (possibly unbounded) linear operators from a Hilbert space $\bH$ into $\bE,$ $\Tr_\bH[\cdot]$ denotes the trace in $\bH,$ and $D_x\phi(x),D_x^2\phi(x)$ denote first and second order Fr\'{e}chet derivatives of $\phi:\bE\to\R$ at $x\in D(A(t)).$

In this paper we revisit the \emph{mild solution} approach to Hamilton-Jacobi equations initiated by Da Prato \cite{dp0} and Cannarsa \cite{candp}, and continued by Gozzi \cite{gozzi1,gozzi2}, Cerrai \cite{cerrai3,cerrai4} and Masiero \cite{masiero1} (see also Da Prato and Zabcyck \cite{dpz3}, Zabcyck \cite{zabc1} and the references therein). This approach consists in rewriting equation (\ref{hjb0}) in mild-integral form (cf. variation-of-constants formula)
\begin{equation}\label{mild}
V(t,x)=[P(t,T)\varphi](x)+\int_t^T \left[P(t,r)\cH(r,\cdot,D_x V(r,\cdot))\right](x)\,dr, \ \ \ (t,x)\in [0,T]\times \bE
\end{equation}
where $P(s,t)$ is the backward transition evolution operator
\begin{equation}\label{OU-op}
[P(s,t)\varphi](x):=\Exp[\varphi(Z(t))|Z(s)=x], \  x\in\bE, \ t\in [s,T], \ \varphi\in\B_b(\bE)
\end{equation}
associated with the \emph{Ornstein-Uhlenbeck} process $\set{Z(t)}_{t\in [0,T]}$ solution to the non-autonomous stochastic Cauchy problem on $\bE$
\[
dZ(t)+A(t)Z(t)\,dt=G(t)\,dW(t), \ \ t\in [0,T].
\]
Here $\set{W(t)}_{t\in[0,T]}$ is an $\bH$-cylindrical Wiener process defined on a probability space $(\Omega,\Fil,\Prob),$ $\Exp[\cdot]$ denotes expectation in the Bochner-integral sense with respect to the probability measure $\Prob$ and $\B_b(\bE)$ denotes the set of bounded Borel-measurable real-valued maps on $\bE.$

Under the so-called null-controllability condition (see Assumption \ref{assu2} in Section 5 below) the backward transition operators $P(s,t)$ satisfy a strong regularizing property, see Theorem \ref{6.2.2}. For the case in which $\bE$ is a Hilbert space and equation (\ref{hjb0}) is autonomous with respect to time variable (i.e. $A(t)$ and $G(t)$ do not depend on $t$), this regularizing property has been used in conjunction with a fixed point argument to prove existence of a unique solution to the integral equation (\ref{mild}) in a certain space of functions, see e.g. Theorem 9.3 in Zabcyck \cite[Sec. 9]{zabc1}, Da Prato and Zabcyck \cite[Part III]{dpz3} and Masiero \cite{masiero1}.

The main purpose of this paper is to show that this result can be easily generalized to the non-autonomous and Banach-space setting. Namely, we obtain the following (see Theorem \ref{main} below)
\begin{thm*}
Let $\varphi\in\mathcal{C}_b(\bE).$ Suppose Assumptions
{\rm\textbf{(AT)}} and {\rm\textbf{\ref{assu1}}-\textbf{\ref{assu4}}} hold true. Then there exists an unique mild solution to equation $(\ref{hjb0}).$
\end{thm*}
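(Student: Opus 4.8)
The plan is to solve the integral equation (\ref{mild}) by a fixed-point argument in a suitable Banach space of functions on $[0,T]\times\bE$. Define the solution map $\Gamma$ by
\[
(\Gamma V)(t,x):=[P(t,T)\varphi](x)+\int_t^T\left[P(t,r)\cH(r,\cdot,D_xV(r,\cdot))\right](x)\,dr,
\]
so that a mild solution is precisely a fixed point of $\Gamma$. The natural space on which to run the argument is the space of bounded continuous functions $V$ on $[0,T]\times\bE$ that are Fr\'echet differentiable in $x$ for each $t<T$, with the blow-up of $\|D_xV(t,\cdot)\|$ controlled as $t\uparrow T$ at the rate dictated by the regularizing estimate of Theorem \ref{6.2.2}; concretely one equips this space with a weighted norm of the form $\|V\|:=\sup_{t,x}|V(t,x)|+\sup_{t<T,x}\omega(t)\,\|D_xV(t,x)\|$ for an appropriate weight $\omega$ (typically a power of $T-t$ governed by the null-controllability rate), and one must first check this weighted space is complete.

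The key steps, in order, are: (i) use $\varphi\in\mathcal{C}_b(\bE)$ together with the smoothing property (Theorem \ref{6.2.2}) to see that $P(t,T)\varphi$ is bounded continuous and Fr\'echet differentiable in $x$ for $t<T$, with $\|D_x[P(t,T)\varphi](x)\|$ bounded by $C\|\varphi\|_\infty$ times the regularizing weight, so the free term lies in the weighted space; (ii) invoke the assumptions on $\cH$ (boundedness/growth in the last variable and Lipschitz continuity in that variable, Assumptions \ref{assu1}--\ref{assu4}) to show that if $V$ lies in the weighted space then $(r,x)\mapsto\cH(r,x,D_xV(r,x))$ is bounded Borel measurable, so $P(t,r)$ may be applied to it; (iii) differentiate under the integral sign, bounding $\|D_x\int_t^T P(t,r)\cH(r,\cdot,D_xV(r,\cdot))\,dr\|$ using the regularizing estimate for $P(t,r)$ and the integrability in $r$ of the resulting singular kernel $(r-t)^{-\beta}$ (this is where the null-controllability exponent must be $<1$ so the time integral converges), to conclude $\Gamma$ maps the weighted space into itself; (iv) estimate $\|\Gamma V_1-\Gamma V_2\|$ using the Lipschitz property of $\cH$ in its last argument and the same kernel bounds, obtaining a constant that can be made $<1$ either by shrinking the time horizon $T$ (then gluing finitely many pieces) or, if the Lipschitz constant is small enough, directly; then Banach's fixed point theorem yields a unique $V$, and standard arguments upgrade it to the unique mild solution on all of $[0,T]$.

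The main obstacle is step (iii)--(iv): making the contraction estimate work requires that the combined effect of the regularizing singularity $(r-t)^{-\beta}$ from Theorem \ref{6.2.2} and the weight $\omega(r)^{-1}=(T-r)^{-\gamma}$ carried by $D_xV(r,\cdot)$ still produces an integrable kernel in $r$ over $[t,T]$, i.e. one needs $\beta+\gamma<1$; verifying that the regularizing rate furnished by Assumption \ref{assu2} is compatible with this, and correctly tracking the two singular endpoints ($r\to t$ from the operator and $r\to T$ from the weight on $V$), is the delicate point, and it also dictates the precise choice of weight $\omega$. A secondary technical issue is justifying differentiation under the Bochner integral and the continuity in $(t,x)$ of the fixed point — these are routine once the uniform estimates are in place, using dominated convergence together with the strong continuity of the evolution operators $P(s,t)$ established earlier in the paper.
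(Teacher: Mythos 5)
Your overall strategy is the same as the paper's: set up $\Gamma$ as a map on a space of bounded functions whose $x$-gradient is allowed to blow up like a negative power of $T-t$ (the paper's $\mathfrak{S}_{T,\alpha}$), and run a Banach fixed point using the smoothing estimate (\ref{eq3a}) of Theorem \ref{6.2.2} together with Assumptions \ref{assu3}--\ref{assu4}. However, the condition you single out as the crux --- that the kernel $(r-t)^{-\beta}(T-r)^{-\gamma}$ be integrable because $\beta+\gamma<1$ --- is not the right requirement, and insisting on it would make your argument fail precisely in the regime the theorem is designed to cover. The two singularities sit at opposite endpoints of $[t,T]$, so $\int_t^T (r-t)^{-\alpha}(T-r)^{-\alpha}\,dr=(T-t)^{1-2\alpha}\int_0^1 s^{-\alpha}(1-s)^{-\alpha}\,ds$ is finite for every $\alpha<1$; no relation between the two exponents is needed. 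What must instead be checked is that the factor $(T-t)^{1-2\alpha}$, which blows up as $t\uparrow T$ when $\alpha>\tfrac12$, is compatible with the weighted norm --- and it is, because the norm of $\mathfrak{S}_{T,\alpha}$ multiplies the gradient bound by $(T-t)^{\alpha}$, leaving $(T-t)^{1-\alpha}\le T^{1-\alpha}$. In the paper both your $\beta$ and your $\gamma$ equal the single exponent $\alpha$ of Assumption \ref{assu3}, and Example \ref{ex4} (the space--time white noise case) gives $\alpha\ge\tfrac12$, so a proof requiring $\beta+\gamma<1$, i.e.\ $2\alpha<1$, would not apply to the paper's main example.

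The second divergence is the contraction mechanism. Assuming a small Lipschitz constant for $\cH$ is not available (the statement allows an arbitrary constant), and the shrink-the-horizon-and-glue alternative, while salvageable, is more delicate than you suggest: on earlier subintervals the terminal datum is the previously constructed solution, the weight must be re-anchored at the new terminal time, and the evolution property of the integral equation across the interfaces has to be verified. The paper avoids all of this by introducing the equivalent norm $\sup_{t\in[0,T]}e^{-\beta(T-t)}\bigl[\norm{v(t,\cdot)}_{0}+(T-t)^{\alpha}\norm{D_xv(t,\cdot)}_{0}\bigr]$ and taking the exponential parameter $\beta$ large; splitting the time integral near $r=T$ (the $\varepsilon$-decomposition in Steps 1--2 of the paper's proof) is what lets the large-$\beta$ gain beat the endpoint singularity, producing a strict contraction on all of $[0,T]$ in one step. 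Your remaining points --- measurability and integrability of $r\mapsto[P(t,r)\cH(r,\cdot,D_xV(r,\cdot))](x)$, differentiation under the integral, completeness of the weighted space --- are consistent with what the paper does.
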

We refer the reader to Sections 4-6 below for the precise statement of Assumptions \textbf{(AT)} and \textbf{\ref{assu1}}-\textbf{\ref{assu4}}. As an example, we consider a non-autonomous HJ equation in $L^p(0,1)$ with $p\geq 2,$ see Example \ref{exHJ:Lp} below.

It should be emphasized that our proof does not present any significant innovation as we follow closely the arguments in the proof for the Hilbert-space case by Masiero \cite[Theorem 2.9]{masiero1}. However, to the best of our knowledge, this is the first paper that deals with infinite-dimensional non-autonomous semi-linear HJ equations in the general Banach-space framework, particularly in Lesbesgue spaces $L^p(\mathcal{O})$ with $p\geq 2.$ This is our main motivation to study HJ equations in a more general Banach-space setting that led to the writing of this paper.


The rest of the paper is organized as follows. In section 2 we recall some basic facts on Gaussian measures in Banach spaces, reproducing kernel Hilbert spaces and the Cameron-Martin formula. We present an alternative proof of a well-known result on regularizing property of Gaussian convolutions which first appeared in the seminal paper by L. Gross \cite{gross1}. In section 3 we review some results from van Neerven and Weis \cite{vnw1} on stochastic integration of deterministic operator valued functions with respect to a cylindrical Wiener process.

In section 4 we recall the setting of Acquistapace and Terreni for parabolic evolution families and non-autonomous evolution equations. In section 5 we introduce backward Ornstein-Uhlenbeck (OU) transition evolution operators in Banach spaces and extend some results from van Neerven \cite[Section 1]{vn1} on the relation between the associated reproducing Kernel Hilbert spaces. In section 6, we state and prove the final result Theorem \ref{main}. Throughout, as the main working example, we consider a linear parabolic second-order stochastic PDE with time-dependent coefficients and space-time white noise formulated as an evolution equation in $L^p(0,1)$ with $p\geq 2.$ We prove the transition operators of the (mild) solution verify the assumptions of the main result. This leads to our final Example \ref{exHJ:Lp}.

\medskip\noindent\emph{Discussion.} Of particular interest are Hamiltonians $\cH$ of the form
\begin{equation}\label{ham}
\cH(t,x,p)=\inf_{u\in M}\set{\seq{F(t,x,u),p}+h(t,x,u)}, \ (t,x,p)\in [0,T]\times\bE\times\bE^*
\end{equation}
where $M$ is a separable metric space, $F:[0,T]\times\bE\times M\to\bE$ and $h:[0,T]\times \bE\times M\to (-\infty,\infty].$ In this case, equation (\ref{hjb0}) is the \emph{Hamilton-Jacobi-Bellman} PDE associated with the dynamic programming principle applied to the following stochastic optimal control problem
\begin{equation}
\text{minimize}  \ \ \ J(X,u)=\Exp\left[\int_0^T h(t,X(t),u(t))\,dt+\varphi(X(T))\right]
\end{equation}
subject to
\begin{itemize}
  \item $\{u(t)\}_{t\in [0,T]}$ is an $M$-valued control process
  \item $\set{X(t)}_{t\in [0,T]}$ is the $\bE$-valued solution to the controlled non-autonomous stochastic evolution equation with additive noise
\begin{equation*}
\begin{split}
dX(t)+A(t)X(t)\,dt&=F(t,X(t),u(t))\,dt+G(t)\,dW(t),\\
X(0)&=x_0\in\bE.
\end{split}
\end{equation*}
\end{itemize}
For the case  in which $\bE$ is Hilbert, under certain additional differentiability assumptions on the Hamiltonian (\ref{ham}), the mild solution of (\ref{hjb0}) can be used to formulate optimality criteria and verification-type results for optimal control problems in Hilbert spaces for stochastic PDEs, see e.g. Da Prato and Zabcyck \cite[Part III]{dpz3} or Masiero \cite[Sec. 4-6]{masiero1}. This can also be combined with Malliavin Calculus and backward stochastic evolution systems in Hilbert spaces to prove  existence of an optimal feedback control, see e.g. Fuhrman and Tessitore \cite{fute2,fute1,fute4,fute3} and the references therein.

Using regularizing properties of stochastic convolutions, Masiero \cite{masiero4} proved existence of mild solutions of a certain class of autonomous HJB equations on the  space of continuous functions $\mathcal{C}(\mathcal{O}).$ Under additional, somewhat restrictive conditions on the nonlinear coefficient $F,$ particularly a dissipative-type condition and a very specific form of dependence with respect to the control variable, Masiero also solved the control problem using backward SDEs but with no use of Malliavin calculus.

At the moment, we are unable to obtain optimality criteria and verification-type results for optimal control problems in Banach spaces for non-autonomous stochastic PDEs as this requires approximation results in $\mathcal{C}_b(\bE)$ by smooth functions that do not seem available at the moment in the general Banach-space setting. However, we believe this can be overcome by employing recent results on Malliavin calculus in Banach spaces (see e.g. Maas \cite{maas}). We will address this issue in a forthcoming paper.

\medskip\noindent\emph{Notation.} Let $\mathcal{O}$ be a bounded domain in $\mathds{R}^d.$ For $m\in\mathds{N}$ and $p\in[1,\infty]$,  $W^{m,p}(\mathcal{O})$ will denote the usual Sobolev space, and for $s\in\R$,  $H^{s,p}(\mathcal{O})$ will denote the scale of spaces
\[
H^{s,p}(\mathcal{O}):=
\left\{
  \begin{array}{ll}
    W^{m,p}(\mathcal{O}), & \ \hbox{if  \ \ $m\in\mathds{N}$;}\\
    \left[W^{k,p}(\mathcal{O}),W^{m,p}(\mathcal{O})\right]_\delta, & \ \text{if} \ \ s\in(0,\infty)\setminus\mathds{N},
  \end{array}
\right.
\]
where $k,m\in\mathds{N}, \ \delta\in(0,1)$ are chosen to satisfy $s=(1-\delta)k+\delta m$ and $[\cdot,\cdot]_\delta$ denotes complex interpolation (see e.g. Triebel \cite{triebel}).


\section{Gaussian measures in Banach spaces, Cameron-Martin formula and smoothing property}
We recall first some basic facts on Gaussian measures in Banach spaces, particularly the Cameron-Martin formula and the smoothing property of Gaussian convolutions in Banach spaces.

Let $\mathcal{B}(\bE)$ denote the Borel $\sigma-$algebra on the real Banach space $\bE,$ let $\bE^*$ be the continuous dual of $\bE$ and let $\seq{\cdot,\cdot}$ denote the duality pairing between $\bE$ and $\bE^*.$

\begin{defn} A Radon measure $\mu$ on $(\bE,\mathcal{B}(\bE))$ is called \textit{Gaussian} (resp. \textit{centered Gaussian}) if, for any linear functional $x^*\in\bE^*,$ the image measure $\mu\circ\seq{x^*,\cdot}^{-1}$ is a Gaussian (resp. centered
Gaussian) measure on $\R.$
\end{defn}
If $\mu$ is a centered Gaussian measure on $\bE,$ there exists an unique bounded linear operator $Q\in\Lin(\bE^*,\bE)$ called the \textit{covariance operator} of $\mu,$ such that for all $x^*,y^*\in\bE^*$ we have
\[
\seq{Qx^*,y^*}=\int_\bE\seq{x,x^*}\seq{x,y^*}\,\mu(dx).
\]
(see e.g. Bogachev \cite{bogachev1}). Notice that $Q$ is \emph{positive} in the sense that
\[
\seq{Qx^*,x^*}\geq 0  \ \ \forall x^*\in\bE^*,
\]
and \emph{symmetric} in the sense that
\[
\seq{Qx^*,y^*}=\seq{Qy^*,x^*} \ \ \ \forall x^*,y^*\in\bE^*.
\]
The Fourier transform $\hat{\mu}$ of $\mu$ is given by
\[
\hat{\mu}(x^*)=\exp\Bigl(-\frac{1}{2}\seq{Qx^*,x^*}\Bigr),  \ \ \ x^*\in\bE^*.
\]
This identity implies that two centered Gaussian measures are equal whenever their covariance operators are equal.

For any $Q\in \Lin(\bE^*,\bE)$ positive and symmetric, the bilinear form on $Q(\bE^*)$ given by
\[
[Qx^*,Qy^*]:=\seq{Qx^*,y^*},    \ \ \ x^*,y^*\in\bE^*.
\]
is a well-defined inner product on $Q(\bE^*).$ We denote with $H_Q$ the Hilbert space completion of $Q(\bE^*)$ with respect to this inner product. The inclusion mapping from $Q(\bE^*)$ into $\bE$ is continuous with respect to the inner product $[\cdot,\cdot]_{H_Q}$ and extends uniquely to a bounded linear injection $i_Q:H_Q\embed\bE.$
\begin{defn}
The pair $(i_Q,H_Q)$ is called the \emph{reproducing kernel Hilbert space} (RKHS) associated with $Q.$
\end{defn}
It can be easily shown that the adjoint operator $i_Q^*:\bE^*\to H_Q$ satisfies $i_Q^*x^*=Qx^*$ for all $x^*\in\bE^*.$ Therefore, $Q$ admits the factorization
\[
Q=i_Q\circ i^*_Q.
\]
This factorization immediately implies that $Q$ is weak$^*$-to-weakly continuous and that, if $\bE$
is separable, so is $H_Q.$  We identify for the sake of simplicity $H_Q$ with its image $i_Q(H_Q)\subset \bE.$

\begin{prop}[\cite{vn1}, Proposition 1.1]\label{vn1Prop1.1}
Let $Q,\tilde{Q}\in\Lin(\bE^*,\bE)$ be two positive symmetric operators. Then, for the corresponding reproducing kernel Hilbert spaces we have $H_Q\subset H_{\tilde{Q}}$ (as subsets of $\bE)$ if and only if there exist a constant $K>0$ such that
\[
\seq{Qx^*,x^*}\le K\langle\tilde{Q}x^*,x^*\rangle, \ \ \ \ \forall x^*\in\bE^*.
\]
\end{prop}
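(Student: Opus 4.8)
The plan is to reduce both implications to the single key identity
$\seq{Qx^*,x^*}=[i_Q^*x^*,i_Q^*x^*]_{H_Q}=\norm{i_Q^*x^*}_{H_Q}^2$ (and the analogous one for $\tilde{Q}$), which is immediate from the factorization $Q=i_Q\circ i_Q^*$ together with $i_Q^*x^*=Qx^*$. Under this identity the asserted inequality is exactly $\norm{i_Q^*x^*}_{H_Q}^2\le K\,\norm{i_{\tilde{Q}}^*x^*}_{H_{\tilde{Q}}}^2$, while the inclusion $H_Q\subset H_{\tilde{Q}}$ (as subsets of $\bE$) is exactly $i_Q(H_Q)\subseteq i_{\tilde{Q}}(H_{\tilde{Q}})$. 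I would show that each of these is equivalent to the existence of a bounded Hilbert-space operator between $H_Q$ and $H_{\tilde{Q}}$ intertwining the two inclusions into $\bE$, and the result then follows.

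For the implication from the inequality to the inclusion, I would first define a linear map $T_0$ on the dense subspace $i_{\tilde{Q}}^*(\bE^*)=\tilde{Q}(\bE^*)\subset H_{\tilde{Q}}$ by $T_0(i_{\tilde{Q}}^*x^*):=i_Q^*x^*$. The inequality makes $T_0$ both well defined and bounded: if $\norm{i_{\tilde{Q}}^*(x^*-y^*)}_{H_{\tilde{Q}}}=0$, i.e. $\seq{\tilde{Q}(x^*-y^*),x^*-y^*}=0$, then the inequality forces $\seq{Q(x^*-y^*),x^*-y^*}=\norm{i_Q^*(x^*-y^*)}_{H_Q}^2=0$; and the same estimate gives $\norm{T_0}\le\sqrt{K}$. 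Thus $T_0$ extends to a bounded operator $T:H_{\tilde{Q}}\to H_Q$ with $T\circ i_{\tilde{Q}}^*=i_Q^*$. Passing to the Hilbert-space adjoint $T^*:H_Q\to H_{\tilde{Q}}$, a direct computation with the pairings, $\seq{i_Qg,x^*}=[g,i_Q^*x^*]_{H_Q}=[g,Ti_{\tilde{Q}}^*x^*]_{H_Q}=[T^*g,i_{\tilde{Q}}^*x^*]_{H_{\tilde{Q}}}=\seq{i_{\tilde{Q}}T^*g,x^*}$ valid for all $x^*\in\bE^*$, yields $i_Q=i_{\tilde{Q}}\circ T^*$ without ever passing to the bidual of $\bE$. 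Consequently $i_Q(H_Q)=i_{\tilde{Q}}(T^*H_Q)\subseteq i_{\tilde{Q}}(H_{\tilde{Q}})$, which is the desired inclusion.

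Conversely, assuming $i_Q(H_Q)\subseteq i_{\tilde{Q}}(H_{\tilde{Q}})$, I would use the injectivity of $i_{\tilde{Q}}$ (part of the RKHS construction) to define $j:H_Q\to H_{\tilde{Q}}$ by letting $j(g)$ be the unique element with $i_{\tilde{Q}}(j(g))=i_Q(g)$, so that $i_{\tilde{Q}}\circ j=i_Q$. Boundedness of $j$ I would obtain from the closed graph theorem: if $g_n\to g$ in $H_Q$ and $j(g_n)\to f$ in $H_{\tilde{Q}}$, then applying the continuous maps $i_Q$ and $i_{\tilde{Q}}$ gives $i_{\tilde{Q}}f=\lim i_{\tilde{Q}}j(g_n)=\lim i_Qg_n=i_Qg=i_{\tilde{Q}}j(g)$, and injectivity of $i_{\tilde{Q}}$ forces $f=j(g)$, so the graph is closed. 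With $\norm{j}<\infty$ in hand, taking adjoints of $i_{\tilde{Q}}\circ j=i_Q$ gives $j^*\circ i_{\tilde{Q}}^*=i_Q^*$, whence by the key identity $\seq{Qx^*,x^*}=\norm{i_Q^*x^*}_{H_Q}^2=\norm{j^*i_{\tilde{Q}}^*x^*}_{H_Q}^2\le\norm{j}^2\norm{i_{\tilde{Q}}^*x^*}_{H_{\tilde{Q}}}^2=\norm{j}^2\seq{\tilde{Q}x^*,x^*}$, i.e. the inequality with $K=\norm{j}^2$.

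The only genuinely nontrivial step is the boundedness of the inclusion map $j$ in the second implication; everything else is algebra with the factorization and adjoints. I expect the closed-graph argument to be the crux, and it relies essentially on the injectivity of $i_{\tilde{Q}}$ to identify the limit. A secondary point requiring care is keeping the two bilinear structures apart — the duality pairing $\seq{\cdot,\cdot}$ between $\bE$ and $\bE^*$ versus the RKHS inner products $[\cdot,\cdot]_{H_Q}$ and $[\cdot,\cdot]_{H_{\tilde{Q}}}$ — and exploiting Hilbert self-duality when forming $T^*$ and $j^*$, which is precisely what allows the argument to avoid the bidual of $\bE$.
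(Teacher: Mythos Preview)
The paper does not supply its own proof of this proposition; it is simply quoted from van Neerven \cite{vn1}. Your argument is correct and is essentially the standard one: reduce both directions to the factorization $Q=i_Q\circ i_Q^*$, build the intertwining operator on the dense range $\tilde{Q}(\bE^*)$ for the forward implication, and invoke the closed graph theorem (via injectivity of $i_{\tilde{Q}}$) for the converse.
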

We will denote with $H_\mu$ (resp. $i_\mu$) instead of $H_Q$ (resp. $i_Q)$ whenever $Q$ is the covariance operator of a Gaussian measure $\mu$ on $\bE.$ In this case, we introduce a linear isometry from $H_\mu$ into $L^2(\bE,\mu)$ as follows: first observe that $\seq{x^*,\cdot}\in L^2(\bE,\mu)$ for every linear functional $x^*\in\bE^*$ and that we have
\begin{equation}\label{var}
\Exp^\mu\abs{\seq{x^*,\cdot}}^2=\int_\bE\abs{\seq{x,x^*}}^2\,\mu(dx)=\seq{Q x^*,x^*},
\ \ \ x^*\in\bE^*.
\end{equation}
Here $\Exp^\mu$ denotes the expectation on the probability space
$(\bE,\B(\bE),\mu).$ Since $Q$ is injective as an operator from
$\bE^*$ into $Q(\bE^*),$ the linear map
\begin{equation}\label{eq3a1}
Q(\bE^*)\ni Q(x^*)\mapsto \seq{x^*,\cdot}\in L^2(\bE,\mu)
\end{equation}
is well-defined and is an isometry in view of (\ref{var}).
\begin{defn}\label{phimu}
We denote by
\begin{equation}\label{eq2a1}
\phi_\mu:H_\mu\to L^2(\bE,\mu)
\end{equation}
the unique extension of the isometry (\ref{eq3a1}) to $H_\mu.$
\end{defn}
This isometry has the property that, for each $h\in H_\mu,$ $\phi_\mu(h)$ is a
$\N(0,|h|^2_{H_\mu})$ random variable. Indeed, for $h\in H_\mu$ fixed,
if $(x_n^*)_n$ is a sequence in $\bE^*$ such that $Qx^*_n\to h$ in
$H_\mu,$ then
\[\seq{x_n^*,\cdot}=\phi_\mu(Qx_n^*)\to \phi_\mu(h), \ \ \ \text{in } L^2(\bE,\mu)\]
and this implies, in particular, that $\Exp^\mu
[e^{i\lambda\seq{x_n^*,\cdot}}]\to \Exp^\mu[e^{i\lambda \phi_\mu(h)}]$ as $n\to\infty$ for all $\lambda\in\R.$ Since $\seq{x_n^*,\cdot}$ is normally
distributed with mean $0$ and variance $\abs{Qx_n^*}_{H_\mu}^2,$ we
have
\[\Exp^\mu
\bigl[e^{i\lambda\seq{x_n^*,\cdot}}\bigr]=\exp\Bigl(-\frac{\lambda^2}{2}\abs{Qx_n^*}^2_{H_\mu}\Bigr),
\ \ \ \lambda\in\R,\] and by dominated convergence, taking the limit as
$n\to\infty$ we get
\[\Exp^\mu\bigl[e^{i\lambda \phi_\mu(h)}\bigr]=\exp\Bigl(-\frac{\lambda^2}{2}\abs{h}^2_{H_\mu}\Bigr), \ \ \ \ \lambda\in\R,\]
which implies that $\phi_\mu(h)$ is a $\N(0,|h|^2_{H_\mu})$-distributed random variable.

\begin{defn}
For each $h\in H_\mu$ we denote by $\mu^h$ the image of the
measure $\mu$ under the translation $z\mapsto z+h,$ that is,
\[\mu^h(A):=\mu(A-h), \ \ \ A\in\B(\bE).\]
We call $\mu^h$ the \textit{shift of the measure $\mu$ by the
vector} $h.$
\end{defn}

\begin{thm}[Cameron-Martin formula]\label{theorema1}
Let $\mu$ be a centered Gaussian measure on $\bE$ with covariance
operator $Q\in\Lin(\bE^*,\bE)$ and let $(i_\mu,H_\mu)$ denote the RKHS
associated with $\mu.$ Then, for any $h\in H_\mu,$ the measure $\mu^h$
is absolutely continuous with respect to $\mu$ and we have
\[\frac{d\mu^h}{d\mu}=\rho_h, \ \ \mu-\text{a.s.}\] with
\[\rho_h:=\exp\Bigl(\phi_\mu(h)-\frac{1}{2}\abs{h}^2_{H_\mu}\Bigr), \ \ \
h\in H_\mu.\]
\end{thm}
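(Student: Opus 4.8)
The plan is to reduce the identity to an equality of Fourier transforms and then to evaluate these using the fact that a shift of a Gaussian measure is again Gaussian. First, since $\phi_\mu(h)$ is a centred Gaussian variable with variance $|h|^2_{H_\mu}$ (established just after Definition \ref{phimu}), its moment generating function at $1$ gives $\int_\bE\rho_h\,d\mu = e^{-\frac12|h|^2_{H_\mu}}\,\Exp^\mu\!\bigl[e^{\phi_\mu(h)}\bigr] = 1$; hence $\rho_h\,\mu$ is a probability measure, and it is Radon because it is absolutely continuous with respect to the Radon measure $\mu$. Likewise $\mu^h$, the image of $\mu$ under the continuous map $z\mapsto z+h$, is Radon. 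Since a finite Radon measure on $\bE$ is determined by its Fourier transform (see e.g. Bogachev \cite{bogachev1}; this is the uniqueness principle behind the remark after the formula for $\hat\mu$ above), it suffices to prove
\[
\int_\bE e^{i\seq{z,y^*}}\,\mu^h(dz) = \int_\bE e^{i\seq{z,y^*}}\rho_h(z)\,\mu(dz), \qquad y^*\in\bE^*.
\]

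For the left-hand side, expanding the shift directly gives
\[
\int_\bE e^{i\seq{z,y^*}}\,\mu^h(dz) = \int_\bE e^{i\seq{z+h,y^*}}\,\mu(dz) = e^{i\seq{h,y^*}}\,\hat\mu(y^*) = \exp\Bigl(i\seq{h,y^*}-\tfrac12\seq{Qy^*,y^*}\Bigr).
\]
For the right-hand side, the key observation is that, under $\mu$, the pair $\bigl(\seq{\cdot,y^*},\,\phi_\mu(h)\bigr)$ is a centred Gaussian vector in $\R^2$: choosing $x_n^*\in\bE^*$ with $Qx_n^*\to h$ in $H_\mu$, each $\bigl(\seq{\cdot,y^*},\,\seq{\cdot,x_n^*}\bigr)$ is Gaussian as a finite-dimensional linear image of $\mu$, these pairs converge in $L^2(\bE,\mu;\R^2)$ to $\bigl(\seq{\cdot,y^*},\,\phi_\mu(h)\bigr)$ by construction of $\phi_\mu$, and an $L^2$-limit of Gaussian vectors is Gaussian. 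Its covariance matrix has diagonal entries $\seq{Qy^*,y^*}$ and $|h|^2_{H_\mu}$, and off-diagonal entry $\lim_n\seq{Qx_n^*,y^*}=\lim_n[Qx_n^*,Qy^*]_{H_\mu}=[h,Qy^*]_{H_\mu}=\seq{h,y^*}$, where the last equality uses $i_\mu^*y^*=Qy^*$. Evaluating the (analytically continued) joint moment generating function of this vector at the point $(i,1)$ yields
\[
\Exp^\mu\!\bigl[e^{\,i\seq{\cdot,y^*}+\phi_\mu(h)}\bigr] = \exp\Bigl(-\tfrac12\seq{Qy^*,y^*}+i\seq{h,y^*}+\tfrac12|h|^2_{H_\mu}\Bigr),
\]
and multiplying by $e^{-\frac12|h|^2_{H_\mu}}$ gives $\int_\bE e^{i\seq{z,y^*}}\rho_h(z)\,\mu(dz)=\exp\bigl(i\seq{h,y^*}-\tfrac12\seq{Qy^*,y^*}\bigr)$, which coincides with the left-hand side. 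Hence $\mu^h=\rho_h\,\mu$.

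The only genuinely delicate points are the two limiting arguments in the second paragraph: that an $L^2(\mu)$-limit of Gaussian vectors is again Gaussian (so that $(\seq{\cdot,y^*},\phi_\mu(h))$ is jointly Gaussian), and the continuity of the $H_\mu$-inner product which, together with $i_\mu^*y^*=Qy^*$, identifies the off-diagonal covariance entry with $\seq{h,y^*}$; one also relies on uniqueness of Fourier transforms of Radon measures on $\bE$, which matters because $\bE$ is not assumed separable. Everything else is the classical formula for the exponential moments of a bivariate Gaussian. A slightly longer alternative avoids the joint-Gaussianity lemma altogether: one first checks the identity for $h=Qx^*$, $x^*\in\bE^*$ (where $\phi_\mu(h)=\seq{x^*,\cdot}$ by construction) by the same Fourier computation, and then passes to a general $h\in H_\mu$ by approximation, using the uniform bound $\Exp^\mu[\rho_{h_n}^2]=e^{|h_n|^2_{H_\mu}}$ to upgrade the in-probability convergence $\rho_{h_n}\to\rho_h$ to convergence in $L^1(\bE,\mu)$.
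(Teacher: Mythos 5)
Your proof is correct, but it is genuinely different from what the paper does: the paper does not prove the Cameron--Martin formula at all, it simply cites Bogachev \cite[Corollary 2.4.3]{bogachev1}. What you supply is a self-contained argument by uniqueness of Fourier transforms of Radon measures, and all the steps check out: the normalization $\int_\bE\rho_h\,d\mu=1$ via the moment generating function of $\phi_\mu(h)$; the Radon property of both $\mu^h$ and $\rho_h\,\mu$ (continuous image, respectively absolute continuity); the computation of $\widehat{\mu^h}$; and the joint Gaussianity of $\bigl(\seq{\cdot,y^*},\phi_\mu(h)\bigr)$ obtained as an $L^2(\mu)$-limit of the genuinely linear pairs $\bigl(\seq{\cdot,y^*},\seq{\cdot,x_n^*}\bigr)$, with the off-diagonal covariance correctly identified as $[h,Qy^*]_{H_\mu}=\seq{h,y^*}$ via $i_\mu^*y^*=Qy^*$. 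The evaluation of the bivariate Gaussian exponential moment at $(i,1)$ is legitimate since $|e^{i\xi+\eta}|=e^{\eta}$ is $\mu$-integrable, so no analytic-continuation subtlety remains. This route is essentially the standard textbook proof (and close in spirit to Bogachev's own), so what it buys over the paper's citation is simply completeness; it also meshes well with the paper's conventions, since the uniqueness-via-Fourier-transform principle and the distributional properties of $\phi_\mu(h)$ are both already set up in Section 2. Your sketched alternative (prove it first for $h=Qx^*$ and pass to the limit using the uniform bound $\Exp^\mu[\rho_{h_n}^2]=e^{|h_n|^2_{H_\mu}}$) is also sound and avoids the joint-Gaussianity lemma at the cost of a uniform-integrability argument.
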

\begin{proof}
 See Bogachev \cite[Corollary 2.4.3]{bogachev1}
\end{proof}
For the remainder of this section, we fix $\varphi\in \B_b(\bE)$  and define the mapping $\psi:\bE\to\R$ as
\[
\psi(x):=\int_\bE \varphi(x+z)\,\mu(dz), \ \ \ x\in \bE.
\]
Recall that $\psi:\bE\to\R$ is Fr\'{e}chet differentiable at $x\in\bE$ in the direction of $H_\mu$ if there exists an element of $H_\mu^*,$ denoted by $D_{H_\mu}\psi(x),$ such that
\[
\lim_{\substack{y\in H_\mu\\y\to 0}}\frac{\abs{\psi(x+y)-\psi(x)-\left(D_{H_\mu}\psi(x)\right)(y)}}{\abs{y}_{H_\mu}}=0.
\]
The following regularizing property is a classical result proved by L. Gross in his seminal paper \cite[Proposition 9]{gross1} using directly the notion of Fr\'{e}chet derivative. Here we present an alternative proof based on G\^{a}teaux differentiability.

\begin{prop}\label{sp0}
The map $\psi:\bE\to\R$ is infinitely Fr\'{e}chet differentiable in the direction of $H_\mu.$
The first Fr\'{e}chet derivative of $\psi$ at $x\in\bE$ in the
direction of $y\in H_\mu$ is given by
\begin{equation}\label{ffder}
\left(D_{H_\mu}\psi(x)\right)(y)=\int_\bE \varphi(x+z)\,\phi_\mu(y)(z)\,\mu(dz),
\end{equation}
and the second Fr\'{e}chet derivative of $\psi$ at $x\in\bE$ in the
directions $y_1,y_2\in H_\mu$ is given by
\begin{equation}\label{sfder}
\begin{split}
  \left(D_{H_\mu}^2
 \psi(x)\right)&(y_1,y_2)\\&=-\psi(x)\left[y_1,y_2\right]_{H_\mu}
+\int_\bE \varphi(x+z)\,\phi_\mu(y_1)(z)\,\phi_\mu(y_2)(z)\,\mu(dz).
\end{split}\end{equation}
Moreover we have the estimates
\begin{align}
\norm{D_{H_\mu}\psi(x)}_{H_\mu^*}&\le\abs{\varphi}_0,\\
\norm{D_{H_\mu}^2\psi(x)}_{\Lin(H_\mu,H_\mu^*)}&\le 2\abs{\varphi}_0.
\end{align}
\end{prop}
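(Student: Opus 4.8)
The plan is to use the Cameron--Martin formula (Theorem~\ref{theorema1}) to move the $y$-dependence of $\psi(x+y)$ out of the argument of $\varphi$ and into an explicit exponential weight, and then to differentiate under the integral sign. Fix $x\in\bE$. For $y\in H_\mu$ the substitution $z\mapsto z+y$ gives $\psi(x+y)=\int_\bE\varphi(x+z+y)\,\mu(dz)=\int_\bE\varphi(x+w)\,\mu^{y}(dw)$, where $\mu^{y}$ is the shift of $\mu$ by $y$; by Theorem~\ref{theorema1},
\[
\psi(x+y)=\int_\bE\varphi(x+w)\,\rho_{y}(w)\,\mu(dw),\qquad
\rho_{y}=\exp\!\Bigl(\phi_\mu(y)-\tfrac12\abs{y}_{H_\mu}^{2}\Bigr).
\]
Now all the $y$-dependence sits in the scalar density $\rho_y$, and $\psi(x+\cdot)$ is the composition of $y\mapsto\rho_y$ with the fixed bounded linear functional $\ell(f):=\int_\bE\varphi(x+w)f(w)\,\mu(dw)$ on $L^1(\bE,\mu)$, which has norm $\le\abs{\varphi}_0$.

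The core of the argument is therefore to show that $y\mapsto\rho_y$ is infinitely differentiable as a map $H_\mu\to L^1(\bE,\mu)$. First I would record that, since $\phi_\mu\colon H_\mu\to L^2(\bE,\mu)$ is bounded linear and $\phi_\mu(h)$ is $\N(0,\abs{h}_{H_\mu}^{2})$-distributed (Definition~\ref{phimu}), one has $\Exp^\mu\exp(p\,\phi_\mu(h))=\exp\!\bigl(\tfrac{p^2}{2}\abs{h}_{H_\mu}^{2}\bigr)<\infty$ for every $p\ge1$, so $\rho_y\in\bigcap_{p\ge1}L^p(\bE,\mu)$ with norm bounded uniformly for $y$ in bounded sets. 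Using the algebraic identities $\phi_\mu(y+tk)=\phi_\mu(y)+t\,\phi_\mu(k)$ (in $L^2(\bE,\mu)$) and $\abs{y+tk}_{H_\mu}^2=\abs{y}_{H_\mu}^2+2t[y,k]_{H_\mu}+t^2\abs{k}_{H_\mu}^2$, one gets $\rho_{y+tk}=\rho_y\exp\!\bigl(t\phi_\mu(k)-t[y,k]_{H_\mu}-\tfrac{t^2}{2}\abs{k}_{H_\mu}^2\bigr)$, so that the difference quotient $t^{-1}(\rho_{y+tk}-\rho_y)$ converges in every $L^p(\bE,\mu)$, as $t\to0$, to $\bigl(\phi_\mu(k)-[y,k]_{H_\mu}\bigr)\rho_y$, the domination for this $L^p$-limit being furnished by H\"older's inequality together with the Gaussian moment bound just stated. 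This gives the G\^ateaux derivative, and the same moment bounds show $y\mapsto\bigl(\phi_\mu(\cdot)-[y,\cdot]_{H_\mu}\bigr)\rho_y$ is continuous from $H_\mu$ into $\Lin(H_\mu,L^1(\bE,\mu))$; by the classical fact that a G\^ateaux-differentiable map with continuous G\^ateaux derivative is Fr\'echet differentiable, $y\mapsto\rho_y$ is $C^1$, and iterating the same computation (each further derivative again produces $\rho_y$ times a polynomial in $\phi_\mu$-factors and $[\cdot,\cdot]_{H_\mu}$-terms, all in every $L^p$) shows it is $C^\infty$. Composing with $\ell$ then shows $\psi$ is $C^\infty$ in the direction of $H_\mu$ with Fr\'echet derivatives obtained by applying $\ell$ to those of $y\mapsto\rho_y$.

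Evaluating at $y=0$, where $\rho_0\equiv1$, gives $\bigl(D_{H_\mu}\psi(x)\bigr)(y)=\ell\bigl(\phi_\mu(y)\bigr)=\int_\bE\varphi(x+z)\,\phi_\mu(y)(z)\,\mu(dz)$, which is \eqref{ffder}. For the second derivative I would differentiate this once more: since $\bigl(D_{H_\mu}\psi(x+y)\bigr)(k)=\int_\bE\varphi(x+z+y)\,\phi_\mu(k)(z)\,\mu(dz)$, the substitution $z\mapsto w-y$ together with the translation identity $\phi_\mu(k)(w-y)=\phi_\mu(k)(w)-[k,y]_{H_\mu}$ ($\mu$-a.e.; this follows from $\seq{x^*,i_\mu y}=[Qx^*,y]_{H_\mu}$ applied along the sequence defining $\phi_\mu(k)$) turns it into $\int_\bE\varphi(x+w)\bigl(\phi_\mu(k)(w)-[k,y]_{H_\mu}\bigr)\rho_y(w)\,\mu(dw)$, and differentiating in $y$ at $0$ by the product rule yields \eqref{sfder}. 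The two estimates are then immediate: since $\mu$ is a probability measure and $\phi_\mu(k)$ is $\N(0,\abs{k}_{H_\mu}^2)$-distributed, Jensen/Cauchy--Schwarz give $\int_\bE\abs{\phi_\mu(k)}\,d\mu\le\abs{k}_{H_\mu}$ and $\int_\bE\abs{\phi_\mu(k)\,\phi_\mu(k')}\,d\mu\le\abs{k}_{H_\mu}\abs{k'}_{H_\mu}$, whence $\norm{D_{H_\mu}\psi(x)}_{H_\mu^*}\le\abs{\varphi}_0$ and $\norm{D^2_{H_\mu}\psi(x)}_{\Lin(H_\mu,H_\mu^*)}\le2\abs{\varphi}_0$.

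The main obstacle is the middle step: making the differentiation under the integral sign rigorous, i.e.\ the $L^1$-smoothness of $y\mapsto\rho_y$. The two delicate points are that $\phi_\mu(y)$ is defined only $\mu$-a.e., with an exceptional set depending on $y$, so that one must argue with $L^p$-convergence of difference quotients (and of the algebraic identities above) rather than genuine pointwise statements; and that the densities $\rho_y$ are unbounded, so that the domination needed for the limit passage cannot be uniform but must be extracted from the Gaussian integrability $\Exp^\mu\exp(p\,\phi_\mu(y))<\infty$ via H\"older's inequality, with all such bounds kept locally uniform in $y$ in order to pass from G\^ateaux to Fr\'echet differentiability. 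Once this is in place the remaining computations are routine.
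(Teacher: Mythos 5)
Your proposal is correct, and its backbone is the same as the paper's: the Cameron--Martin theorem to convert the shift of $\psi$ into the explicit density $\rho_y$, Gaussian exponential integrability of $\phi_\mu(y)$ plus H\"older to dominate difference quotients, the density of $Q(\bE^*)$ in $H_\mu$ to justify the translation identity $\phi_\mu(y_1)(\cdot-y_2)=\phi_\mu(y_1)-[y_1,y_2]_{H_\mu}$, and an upgrade from G\^ateaux to Fr\'echet differentiability via continuity of the derivative. Where you differ is in the packaging: the paper differentiates the scalar map $\alpha\mapsto\psi(x+\alpha y)$ under the integral sign and then invokes Aronszajn's theorem to pass to Fr\'echet differentiability, treating the first and second derivatives separately; you instead factor $\psi(x+\cdot)=\ell\circ(y\mapsto\rho_y)$ with $\ell$ a fixed bounded linear functional on $L^1(\bE,\mu)$ of norm at most $\abs{\varphi}_0$, and prove once and for all that the density map $H_\mu\ni y\mapsto\rho_y\in L^p(\bE,\mu)$ is $C^\infty$, exploiting the multiplicative identity $\rho_{y+tk}=\rho_y\exp\bigl(t\phi_\mu(k)-t[y,k]_{H_\mu}-\tfrac{t^2}{2}\abs{k}_{H_\mu}^2\bigr)$. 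This buys a genuinely uniform treatment of all orders of differentiability (each derivative is $\rho_y$ times a polynomial in $\phi_\mu$-factors and inner products, with the same Gaussian moment bounds), whereas the paper's proof explicitly carries out only the first two orders and leaves the higher ones to ``the same argument''; the price is that you must verify continuity of $y\mapsto D\rho_y$ into $\Lin(H_\mu,L^1(\bE,\mu))$, which you correctly reduce to $L^p$-convergence $\rho_{y_n}\to\rho_y$, and your estimates via Cauchy--Schwarz, $\int_\bE\abs{\phi_\mu(k)}\,d\mu\le\abs{k}_{H_\mu}$, recover exactly the bounds $\abs{\varphi}_0$ and $2\abs{\varphi}_0$ of the statement.
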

\begin{proof}
Let us prove first that $\psi$ is G\^{a}teaux differentiable in the direction of $H_\mu,$ i.e.
that for all $x\in\bE$ and $y\in H_\mu,$ the mapping
\[\R\ni\alpha\mapsto\psi(x+\alpha y)\in\R\]
is differentiable at $\alpha=0.$ Let $x\in\bE$ and $y\in H_\mu$ be fixed and let
$\alpha\in\R.$ Observe that by the Cameron-Martin formula, we have
\begin{equation}\label{psi2}
\psi(x+\alpha y)=\int_\bE \varphi(x+z)\,\mu^{\alpha y}(dz)=\int_\bE
 \varphi(x+z)\rho_{\alpha y}(z)\,\mu(dz).
\end{equation}
Since $\phi_\mu(\alpha y)=\alpha\phi_\mu(y)$ in $L^2(\bE,\mu)$ observe that the random variable
\[
\rho_{\alpha y}=\exp\Bigl(\alpha \phi_\mu(y)-\frac{1}{2}\abs{\alpha
 y}^2_{H_\mu}\Bigr)
\]
is defined on a set $\widehat{\bE}=\widehat{\bE}(y)$ of full $\mu$-measure which
depends only on $y,$ for all $\alpha\in\R.$ Thus, the
mapping
\begin{equation}\label{mapg}
g:\R\times\bE\ni(\alpha,z)\mapsto g(\alpha,z):=\rho_{\alpha y}(z)\in\R
\end{equation}
is well-defined and measurable. Moreover, for $\varepsilon>0$ fixed we have the
following estimate for all $\abs{\alpha_0}<\varepsilon, \
z\in\widehat{\bE},$
\begin{align}
\abs{\frac{\partial
g}{\partial\alpha}(\alpha_0,z)}&=\rho(\alpha_0
 y,z)\abs{\phi_\mu(y)(z)-\alpha_0\abs{y}^2_{H_\mu}}\notag\\
&\le\exp(\varepsilon\abs{\phi_\mu(y)(z)})
\left(\abs{\phi_\mu(y)(z)}+\varepsilon\abs{y}^2_{H_\mu}\right).\label{est1}
\end{align}
We know $\phi_\mu(y)$ is Gaussian random variable with moment generating
function
\[
\Exp^\mu\bigl[e^{\lambda
 \phi_\mu(y)}\bigr]=\exp\Bigl(\frac{\lambda^2}{2}\abs{y}^2_{H_\mu}\Bigr), \ \ \ \lambda\in\R.
\]
This implies, in particular, that
$\exp(\varepsilon\abs{\phi_\mu(y)})
$ belongs to $L^2(\bE,\mu).$ Since $\phi_\mu(y)\in L^2(\bE,\mu),$ by H\"{o}lder's inequality the right hand side
in (\ref{est1}) belongs to $L^1(\bE,\mu).$ Thus we may differentiate in the right hand-side of (\ref{psi2})
with respect to $\alpha$ under the  sign and obtain that the G\^{a}teaux derivative of $\psi$ at $x$ in the direction of $y$ is given by
\begin{align*}
(d_{H_\mu}\psi(x))(y)&=\Bigl.\frac{d}{d\alpha}\Bigr|_{\alpha=0}\psi(x+\alpha
y)\\
&=\int_\bE\varphi(x+z)\left[\Bigl.\frac{\partial}{\partial\alpha}\Bigr|_{\alpha=0}\rho_{\alpha y}(z)\right]\,\mu(dz),\\
&=\int_\bE\varphi(x+z)\phi_\mu(y)(z)\,\mu(dz),
\end{align*}
as well as the following estimate
\[\norm{d_{H_\mu}\psi(x)}_{\Lin(H_\mu,\R)}\le\abs{\varphi}_0.\]
In turn this implies that the Gateaux derivative $d\psi:H_\mu\to\Lin(H_\mu,\R)$ is continuous and uniformly bounded.
Since $\psi$ is also continuous and uniformly bounded on $H_\mu,$ by
Theorem 3 in Aronszajn \cite[Ch 2, Section 1]{aronszajn} we conclude that
$\psi$ is Fr\'{e}chet differentiable in the direction of $H_\mu$ and (\ref{ffder}) follows.

For the second-order G\^{a}teaux derivative, if $y_1,y_2\in H_\mu$ and $\alpha\in\R$ we have
\begin{align*}
(d_{H_\mu}\psi(x+\alpha y_2))(y_1)&=\int_\bE\varphi(x+\alpha y_2+z)\phi_\mu(y_1)(z)\,\mu(dz)\\
&=\int_\bE\varphi(x+\xi)\phi_\mu(y_1)(\xi-\alpha y_2)\,\mu^{\alpha y_2}(d\xi)\\
&=\int_\bE\varphi(x+\xi)\phi_\mu(y_1)(\xi-\alpha y_2)\,\rho_{\alpha y_2}(\xi)\,\mu(d\xi)
\end{align*}
where we have used again the Cameron-Martin formula and the change of variable $\xi=z+\alpha y_2$ whose push-forward measure with respect with $\mu$ is given by $\mu^{\alpha y_2}.$

If $y_1=Qx_1^*$ for some $x_1^*\in\bE^*,$ from the definition of $\phi_\mu$ it follows that
\[\phi_\mu(y_1)(\xi-\alpha y_2)=\seq{x_1^*,\xi-\alpha y_2}=\seq{x_1^*,\xi}-\alpha\seq{x_1^*,y_2}=\phi_\mu(y_1)(\xi)-\alpha[y_1,y_2]_{H_\mu}\]
in which case we have
\begin{equation}\label{dy2y1}
(d_{H_\mu}\psi(x+\alpha y_2))(y_1)=\int_\bE\varphi(x+\xi)\left(\phi_\mu(y_1)(\xi)-\alpha[y_1,y_2]_{H_\mu}\right)\rho_{\alpha y_2}(\xi)\,\mu(d\xi).
\end{equation}
Since both sides of (\ref{dy2y1}) are continuous in $y_1\in H_\mu$ and $Q(\bE^*)$ is dense in $H_\mu,$ the above equality holds for any $y_1\in H_\mu.$ In addition, the equality
\begin{align*}
\Bigl.\frac{\partial}{\partial\alpha}\Bigr|_{\alpha=0}&\left[\left(\phi_\mu(y_1)(\xi)-\alpha\left[y_1,y_2\right]_{H_\mu}\right)\rho(\alpha
y_2,\xi)\right]\\&=-\left[y_1,y_2\right]_{H_\mu}+\phi_\mu(y_1)(\xi)\phi_\mu(y_2)(\xi),
\end{align*}
holds for all $\xi$ in a subset of $\bE$ with full $\mu$-measure that only depends
on $y_2.$ Again, we can differentiate under the integral sign with respect to $\alpha$ to obtain the second G\^{a}teaux derivative
of $\psi$ at $x$ in the direction o $y_1$ and $y_2,$
\begin{align*}
\left(d_{H_\mu}^2 \psi(x)\right)&(y_1,y_2)=\Bigl.\frac{d}{d\alpha}\Bigr|_{\alpha=0}(d_{H_\mu}\psi(x+\alpha y_2))(y_1)\\
&=\int_\bE\varphi(x+\xi)\Bigl.\frac{\partial}{\partial\alpha}\Bigr|_{\alpha=0}\left[\left(\phi_\mu(y_1)(\xi)
-\alpha\left[y_1,y_2\right]_{H_\mu}\right)\rho_{\alpha y_2}(\xi)\right]\,\mu(d\xi)\\
&=\int_\bE\varphi(x+\xi)\left(\phi_\mu(y_1)(\xi)\phi_\mu(y_2)(\xi)-\left[y_1,y_2\right]_{H_\mu}\right)\,\mu(d\xi)
\end{align*}
together with the following estimate
\[\norm{d_{H_\mu}^2\psi(x)}_{\Lin(H_\mu,H_\mu^*)}\le 2\abs{\varphi}_0,\] for all
$x\in\bE.$ By the same argument as above $\psi$ is also twice Fr\'{e}chet differentiable and
(\ref{sfder}) follows.
\end{proof}
By identifying $H_\mu$ with its dual $H^*_\mu,$ the map $D_{H_\mu}^2\psi(x)$ defines a
bounded linear operator on $H_\mu.$ The following lemma shows that it is actually a Hilbert-Schmidt
operator. The proof follows the same argument as in the Hilbert-space case (see e.g. \cite[Chapter 3]{dpz3}). We include the proof for the sake of completeness.

\begin{lem}\label{6.2.7}
For each $x\in\bE$ we have $D_{H_\mu}^2\psi(x)\in \T_2(H_\mu)$ and
\begin{equation}\label{hs2der0}
\norm{D_{H_\mu}^2\psi(x)}_{\T_2(H_\mu)}\le \sqrt{2}\abs{\varphi}_0.
\end{equation}
If $\varphi\in C^1_b(\bE)$ we have
\begin{equation}\label{hs2der1}
\norm{D_{H_\mu}^2\psi(x)}_{\T_2(H_\mu)}\le \abs{\varphi}_1.
\end{equation}
\end{lem}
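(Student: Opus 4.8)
The plan is to compute the Hilbert--Schmidt norm of $D_{H_\mu}^2\psi(x)$ in a fixed orthonormal basis $(e_k)_{k\ge 1}$ of $H_\mu$ and to recognize the resulting matrix entries as Fourier coefficients of the bounded function $f:=\varphi(x+\cdot)\in L^2(\bE,\mu)$ with respect to an explicit orthonormal system in $L^2(\bE,\mu)$; Bessel's inequality then yields both estimates.

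Concretely, after identifying $H_\mu$ with $H_\mu^*$, I would set $a_{jk}:=\bigl(D_{H_\mu}^2\psi(x)\bigr)(e_j,e_k)$, so that $\norm{D_{H_\mu}^2\psi(x)}_{\T_2(H_\mu)}^2=\sum_{j,k}|a_{jk}|^2$. Using \eqref{sfder}, the relation $[e_j,e_k]_{H_\mu}=\delta_{jk}$ and the identity $\int_\bE\varphi(x+z)\,\mu(dz)=\psi(x)$, I would rewrite
\[
a_{jk}=\int_\bE\varphi(x+z)\,\phi_\mu(e_j)(z)\,\phi_\mu(e_k)(z)\,\mu(dz)\ \ (j\ne k),\qquad a_{kk}=\int_\bE\varphi(x+z)\bigl(\phi_\mu(e_k)(z)^2-1\bigr)\,\mu(dz).
\]
Since $\phi_\mu$ is an isometry into $L^2(\bE,\mu)$, the random variables $\phi_\mu(e_k)$ are independent standard Gaussian, so each product $\phi_\mu(e_j)\phi_\mu(e_k)$ and each $\phi_\mu(e_k)^2-1$ belongs to $L^2(\bE,\mu)$, and a short moment computation (Isserlis' formula) shows that
\[
\Bigl\{\phi_\mu(e_j)\phi_\mu(e_k)\ :\ j<k\Bigr\}\cup\Bigl\{\tfrac{1}{\sqrt 2}\bigl(\phi_\mu(e_k)^2-1\bigr)\ :\ k\ge 1\Bigr\}
\]
is an orthonormal system (the elements of the second Wiener chaos). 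Hence $|a_{jk}|^2$ for $j\ne k$ and $\tfrac12|a_{kk}|^2$ are squared Fourier coefficients of $f$ against this system, and Bessel's inequality gives
\[
\sum_{j,k}|a_{jk}|^2=2\Bigl(\sum_{j<k}|a_{jk}|^2+\sum_{k}\tfrac12|a_{kk}|^2\Bigr)\le 2\norm{f}_{L^2(\bE,\mu)}^2\le 2\abs{\varphi}_0^2,
\]
which is \eqref{hs2der0}.

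For the sharper bound \eqref{hs2der1} when $\varphi\in C^1_b(\bE)$, I would first differentiate under the integral sign to obtain $\bigl(D_{H_\mu}\psi(x)\bigr)(y_1)=\int_\bE\bigl(D_{H_\mu}\varphi(x+z)\bigr)(y_1)\,\mu(dz)$ (legitimate by dominated convergence, the difference quotients being dominated by $\abs{\varphi}_1\abs{y_1}_{H_\mu}$), and then apply formula \eqref{ffder} to this map --- which is again of the form $\int_\bE\tilde\varphi(x+z)\,\mu(dz)$ with bounded integrand $\tilde\varphi(\cdot)=\bigl(D_{H_\mu}\varphi(\cdot)\bigr)(y_1)$ --- to get
\[
a_{jk}=\bigl(D_{H_\mu}^2\psi(x)\bigr)(e_j,e_k)=\int_\bE\bigl(D_{H_\mu}\varphi(x+z)\bigr)(e_j)\,\phi_\mu(e_k)(z)\,\mu(dz).
\]
For each fixed $j$, the numbers $(a_{jk})_{k}$ are then the Fourier coefficients of $z\mapsto\bigl(D_{H_\mu}\varphi(x+z)\bigr)(e_j)$ against the orthonormal system $(\phi_\mu(e_k))_k$ in $L^2(\bE,\mu)$, so Bessel gives $\sum_k|a_{jk}|^2\le\int_\bE\bigl|\bigl(D_{H_\mu}\varphi(x+z)\bigr)(e_j)\bigr|^2\,\mu(dz)$; summing over $j$ and using $\sum_j\bigl|\bigl(D_{H_\mu}\varphi(x+z)\bigr)(e_j)\bigr|^2=\norm{D_{H_\mu}\varphi(x+z)}_{H_\mu^*}^2\le\abs{\varphi}_1^2$ yields \eqref{hs2der1}.

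I expect the only delicate points to be bookkeeping ones: verifying the orthonormality of the second-chaos system (a routine Gaussian moment computation), justifying the differentiation under the integral sign and the interchange of the two G\^{a}teaux derivatives in the $C^1_b$ case (the same dominated-convergence arguments already used in the proof of Proposition \ref{sp0}), and handling the countable family of products $\phi_\mu(e_j)\phi_\mu(e_k)$ through finite truncations so that the $\mu$-a.e.\ defined representatives cause no ambiguity. None of these is a genuine obstacle; the whole content is the degree-two chaos structure together with Bessel's inequality.
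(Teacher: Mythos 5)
Your argument is correct and is essentially the paper's own proof: the general case uses the same second-chaos orthonormal system $\{\phi_\mu(e_j)\phi_\mu(e_k)\}_{j<k}\cup\{\tfrac{1}{\sqrt2}(\phi_\mu(e_k)^2-1)\}_k$ together with Bessel's inequality applied to $z\mapsto\varphi(x+z)$, and the $C^1_b$ case rests on the same identity $\bigl(D_{H_\mu}^2\psi(x)\bigr)(e_j,e_k)=\int_\bE [D\varphi(x+z),e_j]_{H_\mu}\,\phi_\mu(e_k)(z)\,\mu(dz)$ followed by Bessel in the first chaos. Your bookkeeping over unordered pairs (and the use of Bessel rather than an appeal to completeness of $\{\phi_\mu(e_k)\}$) is in fact slightly more careful than the paper's, but the substance is identical.
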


\begin{proof}
Let $(e_i)_i$ be an orthonormal basis of $H_\mu$ and let $x\in\bE$
be fixed. Let us prove first the case $\varphi\in C^1_b(\bE).$ By
the same argument used in the proof of (\ref{ffder}) one can
derive
\[[D_{H_\mu}^2\psi(x)y_1,y_2]=\int_\bE [D\varphi(x+z),y_1]_{H_\mu}\,\phi_\mu(y_2)(z)\,\mu(dz), \ \ \ y_1,y_2\in H_\mu.\]
Since the map $\phi_\mu$ is an isometry from $H_\mu$ to $L^2(\bE,\mu),$ the
random variables $\phi_\mu(e_k), k\in\mathbb{N},$ form a complete
orthonormal system in $L^2(\bE,\mu)$ and by Parseval identity and
dominated convergence we get
\begin{align*}
\norm{D_{H_\mu}^2\psi(x)}_{\T_2(H_\mu)}^2&=\sum_{i=1}^\infty\abs{D_{H_\mu}^2\psi(x)e_i}_{H_\mu}^2
=\sum_{i,k=1}^\infty\abs{[D_{H_\mu}^2\psi(x)e_i,e_k]_{H_\mu}}^2\\
&=\sum_{i,k=1}^\infty\abs{\seq{[D\varphi(x+\cdot),e_i]_{H_\mu},\phi_\mu(e_k)}_{L^2(\bE,\mu)}}^2\\
&=\sum_{i=1}^\infty\norm{[D\varphi(x+\cdot),e_i]_{H_\mu}}_{L^2(\bE,\mu)}^2\\
&=\int_\bE\sum_{i=1}^\infty\abs{[D\varphi(x+z),e_i]_{H_\mu}}^2\,\mu(dz)\\
&=\int_\bE\abs{D\varphi(x+z)}^2_{H_\mu}\,\mu(dz)\\
&\le\abs{\varphi}_1^2
\end{align*}
and (\ref{hs2der1}) follows. For the general case $\varphi\in\B_b(\bE),$ we define the random variables
\[\zeta_{i,k}:=
  \begin{cases}
   \frac{1}{\sqrt{2}}\left(\phi_\mu(e_i)^2-1\right), & \text{if} \ i=k,\\
    \phi_\mu(e_i)\phi_\mu(e_k), & \text{if} \ i\neq k.
  \end{cases}\]
Since $\phi_\mu(e_k), k\in\mathbb{N},$ are independent Gaussian random
variables with mean $0$ and variance $1,$ we get
\[\seq{\zeta_{i,k},\zeta_{i',k'}}_{L^2(\bE,\mu)}=0, \ \ \ \text{for } (i,k)\neq(i',k')\]
and
\begin{align*}
\norm{\zeta_{i,k}}_{L^2(\bE,\mu)}^2&=\Exp\zeta_{i,k}^2=\Exp\left(\phi_\mu(e_i)^2\phi_\mu(e_k)^2\right)=1, \ \ \ i\neq k\\
\norm{\zeta_{i,i}}_{L^2(\bE,\mu)}^2&=\Exp\zeta_{i,i}^2=\frac{1}{2}\Exp\left(\phi_\mu(e_i)^4-2\phi_\mu(e_i)^2+1\right)=\frac{1}{2}(3-2+1)=1,
\end{align*}
i.e. the system $\{\zeta_{i,k}: i,k\in\mathbb{N}\}$ is orthonormal
in $L^2(\bE,\mu).$ Recalling (\ref{sfder}), for $i,k\in\mathbb{N}$
we have
\[[D_{H_\mu}^2\psi(x)e_i,e_k]_{H_\mu}=\left\{
  \begin{array}{ll}
   \sqrt{2}\seq{\beta,\zeta_{i,i}}_{L^2(\bE,\mu)} , & \hbox{if} \ i=k,\\
    \seq{\beta,\zeta_{i,k}}_{L^2(\bE,\mu)}, & \hbox{if} \ i\neq k,
  \end{array}
\right.\] where $\beta(z):=\varphi(x+z).$ Thus, from the Parseval
identity and Bessel inequality it follows that
\begin{align*}
\norm{D_{H_\mu}^2\psi(x)}_{\T_2(H_\mu)}^2&=\sum_{i=1}^\infty\abs{D_{H_\mu}^2\psi(x)e_i}_{H_\mu}^2
=\sum_{i,k=1}^\infty\abs{[D_{H_\mu}^2\psi(x)e_i,e_k]_{H_\mu}}^2\\
&=2\sum_{i=1}^\infty\abs{\seq{\beta,\zeta_{i,i}}_{L^2(\bE,\mu)}}^2+\sum_{\substack{i,k=1\\
i\neq k}}^\infty\abs{\seq{\beta,\zeta_{i,k}}_{L^2(\bE,\mu)}}^2\\
&\le
2\sum_{i,k=1}^\infty\abs{\seq{\beta,\zeta_{i,k}}_{L^2(\bE,\mu)}}^2\\
&\le 2 \norm{\beta}^2_{L^2(\bE,\mu)}\\&\le 2\abs{\varphi}^2_0.
\end{align*}
\end{proof}

\section[Stochastic integration of deterministic operator-valued functions in Banach spaces]{Stochastic integration of deterministic
operator-valued\\functions in Banach spaces}

In this section we review some of the results from
van Neerven and Weis \cite{vnw1} on stochastic integration of deterministic operator
valued functions with respect to a cylindrical Wiener process.
From this point onwards $(\bH,[\cdot,\cdot]_\bH)$ denotes a
Hilbert space and $(\gamma_n)_n$ a sequence of real-valued
standard Gaussian random variables on a probability space $(\Omega
,\Fil,\Prob)$ endowed with a filtration
$\mathds{F}=\{\Fil_t\}_{t\ge 0}.$

\begin{defn}\label{cylwiener}
A family $W(\cdot)=\{W(t)\}_{t\geq 0}$ of bounded linear operators from $\bH$ into $L^2(\Omega;\R)$ is called a $\bH-$\emph{cylindrical Wiener process}
(with respect to the filtration $\mathds{F})$ iff
\begin{enumerate}[(i)]
\item $\Exp\, W(t)y_1 W(t)y_2 = t[y_1 ,y_2]_\bH$, for all $t\ge 0$ and $y_1 ,y_2 \in\bH$,

\item for each $y\in\bH$, the process $\{W(t)y\}_{t\geq 0}$ is a standard one-dimensional Wiener process with respect to $\mathds{F}.$
\end{enumerate}
\end{defn}

\begin{defn}
$R\in\Lin(\bH,\bE)$ is $\gamma-$\emph{radonifying} iff there exists an orthonormal basis $(e_n)_{n\geq 1}$ of $\bH$ such that the sum $\sum_{n\geq 1}\gamma_n Re_n$ converges in $L^2(\Omega;\bE).$
\end{defn}

We denote by $\gamma(\bH,\bE)$ the class of $\gamma-$radonifying operators from $\bH$ into $\bE,$ which can be proved to be a Banach space when equipped with the norm
\[
\norm{R}^2_{\gamma(\bH,\bE)}:=\Exp\Bigl|\sum_{n\geq 1}\gamma_n Re_n\Bigr|^2_\bE , \ \ \ \ R\in \gamma(\bH,\bE).
\]
The above definition is independent of the choice of the orthonormal basis $(e_n)_{n\geq 1}$ of $\bH.$ Moreover, $\gamma(\bH,\bE)$ is continuously embedded into $\Lin(\bH,\bE)$ and is an operator ideal in the sense that if $\bH'$ and $\bE'$ are Hilbert and Banach spaces respectively such that $S_1\in\Lin(\bH',\bH)$ and $S_2\in\Lin(\bE,\bE')$ then $R\in \gamma(\bH,\bE)$ implies $S_2RS_1\in\gamma(\bH',\bE')$ with
\[
\norm{S_2RS_1}_{\gamma(\bH',\bE')}\le \norm{S_2}_{\Lin(\bE,\bE')}\norm{R}_{\gamma(\bH,\bE)}\norm{S_1}_{\Lin(\bH',\bH)}
\]
It can also be proved that $R\in \gamma(\bH,\bE)$ iff $RR^*$ is the covariance operator of a centered Gaussian measure on $\B(\bE),$ and if $\bE$ is a Hilbert space, then $R\in\gamma(\bH,\bE)$ iff $R$ is a Hilbert-Schmidt operator from $\bH$ into $\bE$ (see e.g. van Neerven \cite{vn0} and the references therein).



The $\gamma-$radonifying property in the following example goes back to Brzezniak \cite{brz96}, and will be used later in our main Example \ref{ex2}. For the sake of completeness, we include a proof which follows closely arguments from van Neerven \cite[Chapter 15]{vn0}.
\begin{ex}\label{ex0}
For $p\geq 1,$ let $\Delta_p$ denote the realization of $-\frac{d^2}{d\xi^2}$ in $L^p(0,1)$ with zero-Dirichlet boundary conditions.
Then, for $\sigma\in (\frac{1}{4},1),$  the identity operator on $D(\Delta_2)$ extends to a continuous embedding $j:D(\Delta_2)\embed D(\Delta_p^{1-\sigma})$ that is $\gamma-$radonifying.
\end{ex}

\begin{proof}
The functions $e_n(\xi)=\sqrt{2}\sin(n\pi\xi),$ $n\geq 1,$ form an orthonormal basis of eigenfunctions for $\Delta_2$ with eigenvalues $\lambda_n=(n\pi)^2.$ If we endow $D(\Delta_2)$ with the equivalent Hilbert norm $\abs{y}_{D(\Delta_2)}:=\abs{\Delta_2 y}_{L^2(0,1)},$ the functions $\lambda_n^{-1}e_n$ form an orthonormal basis for $D(\Delta_2).$

Let $(\gamma_n)_n$ be a Gaussian sequence on a probability space $(\Omega,\Fil,\Prob).$ Then, we have
\begin{align}
\Exp\Bigl|\sum_{n\geq 1}\gamma_n\lambda_n^{-1}e_n\Bigr|^2_{D(\Delta_p^{1-\sigma})}
&=\Exp\Bigl|\sum_{n\geq 1}\gamma_n\lambda_n^{-1}\Delta_p^{1-\sigma}e_n\Bigr|^2_{L^p(0,1)}\notag\\
&=\Exp\Bigl|\sum_{n\geq 1}\gamma_n(n\pi)^{-2\sigma}e_n\Bigr|^2_{L^p(0,1)}\label{gammanen}
\end{align}
Using H\"{o}lder's inequality, we have
\begin{align*}
\Exp\Bigl|\sum_{n=N}^M\gamma_n(n\pi)^{-2\sigma}e_n\Bigr|^2_{L^p(0,1)}
&\le \left(\Exp\Bigl|\sum_{n=N}^M\gamma_n(n\pi)^{-2\sigma}e_n\Bigr|^p_{L^p(0,1)}\right)^{2/p}\\
&= \left(\Exp\int_0^1\bigl|\sum_{n=N}^M\gamma_n(n\pi)^{-2\sigma}e_n(\xi)\bigr|^p\,d\xi\right)^{2/p}\\
&= \left(\int_0^1 \Exp\bigl|\sum_{n=N}^M\gamma_n(n\pi)^{-2\sigma}e_n(\xi)\bigr|^p\,d\xi\right)^{2/p}\\
&\le\left(\int_0^1 \Bigl(\Exp\bigl|\sum_{n=N}^M\gamma_n(n\pi)^{-2\sigma}e_n(\xi)\bigr|^2\Bigr)^{p/2}\,d\xi\right)^{2/p}
\end{align*}
By Kahane-Khintchine inequality, there exists a constant $c'$ such that
\begin{align*}
  \Exp\bigl|\sum_{n=N}^M\gamma_n(n\pi)^{-2\sigma}e_n(\xi)\bigr|^2&\le c'\sum_{n=N}^M\abs{(n\pi)^{-2\sigma}e_n(\xi)}^2\\
  &=c'\sum_{n=N}^M (n\pi)^{-4\sigma}e^2_n(\xi)
\end{align*}
Hence, we obtain
\begin{align*}
\Exp\Bigl|\sum_{n=N}^M\gamma_n(n\pi)^{-2\sigma}e_n\Bigr|^2_{L^p(0,1)}
&\le  \left(\int_0^1 \Bigl[c'\sum_{n=N}^M (n\pi)^{-4\sigma}e^2_n(\xi)\Bigr]^{p/2}\,d\xi\right)^{2/p}\\
&= c'\Bigl|\sum_{n=N}^M (n\pi)^{-4\sigma}e_n^2\Bigr|_{L^{p/2}(0,1)}\\
&\le c'\sum_{n=N}^M (n\pi)^{-4\sigma}\abs{e_n^2}_{L^{p/2}(0,1)}
\end{align*}
Since $\abs{e_n^2}_{L^{p/2}(0,1)}=\abs{e_n}_{L^{p}(0,1)}^2\le 2$ for all $n\geq 1,$ it follows that
\[
\Exp\Bigl|\sum_{n=N}^M\gamma_n(n\pi)^{-2\sigma}e_n\Bigr|^2_{L^p(0,1)}\le 2c'_p\sum_{n=N}^M (n\pi)^{-4\sigma}.
\]
The right-hand side of the last inequality tends to $0$ as $N,M\to\infty$ since $\sigma>\frac{1}{4}.$ Therefore, the right-hand side of (\ref{gammanen}) is finite, and the claim follows.
\end{proof}

Before we discuss the integral for $\Lin(\bH,\bE)-$valued functions, we observe that we can integrate certain $\bH-$valued functions with respect to a $\bH-$cylindrical Wiener process $W(\cdot).$ For a step function of the form $\psi=\mathbf{1}_{(s,t]}y$ with $y\in\bH$ we define
\[
\int_0^T\psi(r)\,dW(r):=W(t)y-W(s)y.
\]
This extends to arbitrary step functions $\psi$ by linearity, and a standard computation shows that
\[
\Exp\Bigl|\int_0^T \psi(r)\,dW(r)\Bigr|_\R^2=\int_0^T\abs{\psi(t)}_\bH^2\,dt.
\]
Since the set of step functions $L_{\mathrm{step}}^2(0,T;\bH)$ is dense in $L^2(0,T;\bH),$ the map
\[
I_T:L_{\mathrm{step}}^2(0,T;\bH)\ni\psi\mapsto \int_0^T\psi(t)\,dW(t)\in L^2(\Omega;\R)
\]
extends to a (linear!) isometry from $L^2(0,T;\bH)$ into $L^2(\Omega).$ We now define the stochastic integral for deterministic $\Lin(\bH,\bE)-$valued functions with respect to $W(\cdot).$ 
\begin{defn}
\begin{enumerate}
\item A function $\Phi:(0,T)\to\Lin(\bH,\bE)$ is said to belong \emph{scalarly} to $L^2(0,T;\bH)$ if the map
\[
[0,T]\ni t\mapsto\Phi(t)^*x^*\in\bH\
\]
belongs to $L^2(0,T;\bH)$ for every $x^*\in\bE^*.$

\smallskip\item A function $\Phi:(0,T)\to\Lin(\bH,\bE)$ is said to be \emph{stochastically integrable} with respect to $W(\cdot)$ if it belongs scalarly to $L^2(0,T;\bH)$ and for all $A\subset (0,T)$ measurable there exists a random variable $Y_A\in L^2(\Omega,\Fil,\Prob;\bE)$ such that
\[
\seq{Y_A,x^*}=\int_0^T \mathbf{1}_A (t)\Phi(t)^*x^*\,dW(t), \ \ \ \Prob-\text{a.s.}, \ \ \text{for all} \ x^*\in\bE^*.
\]
We denote
\[
\int_A \Phi(t)\,dW(t):=Y_A
\]
\end{enumerate}
\end{defn}

By Fernique's theorem, the $\bE-$valued random variables $Y_A$ are uniquely determined almost everywhere and Gaussian. In particular $Y_A\in L^p(\Omega;\bE)$ for all $p\geq 1.$

For a function $\Phi:(0,T)\to\Lin(\bH,\bE)$ that belongs scalarly to $L^2(0,T;\bH)$ we define an operator $R_\Phi:L^2(0,T;\bH)\to \bE^{**}$ by
\[
\seq{x^*,R_\Phi f}:=\int_0^T [\Phi(t)^*x^*,f(t)]_\bH\,dt, \ \ \ f\in L^2(0,T;\bH), \ \ x^*\in\bE^*.
\]
Observe that $I_\Phi$ is the adjoint of the operator
\[
\bE^*\ni x^*\mapsto \Phi(t)^*x^*\in L^2(0,T;\bH).
\]
If $\Phi(\cdot)y$ is strongly measurable for all $y\in\bH$ then $R_\Phi$ maps $L^2(0,T;\bH)$ into $\bE.$ The following theorem characterizes the class of stochastically integrable functions with respect to the $\bH-$cylindrical Wiener process $W(\cdot).$

\begin{thm}[\cite{vnw1}, Theorem 4.2]\label{vnw1thm4.2}
Let $\bE$ be a separable Banach space. For a function $\Phi:(0,T)\to\Lin(\bH,\bE)$ that belongs scalarly to $L^2(0,T;\bH)$ the following assertions are equivalent
\begin{enumerate}
\item $\Phi$ is stochastically integrable with respect to $W(\cdot)$

\item There exists an $\bE-$valued random variable $Y$ such that for all $x\in\bE^*$
    \[
    \seq{Y,x^*}=\int_0^T \Phi(t)^*x^*\,dW(t), \ \ \ \Prob-\text{a.s.}, \ \ \forall x^*\in\bE^*
    \]
\item There exists a centered Gaussian measure $\mu$ on $\bE$ with covariance operator $Q\in\Lin(\bE,\bE^*)$ such that for all $x\in\bE^*$
    \[
    \seq{Qx^*,x^*}=\int_0^T \abs{\Phi(t)^*x^*}^2_\bH\,dt;
    \]
\item There exist a separable Hilbert space $\mathfrak{H}$ a linear bounded operator $S\in\gamma(\mathfrak{H},\bE)$ such that for all $x\in\bE^*$
    \[
    \int_0^T\abs{\Phi(t)^*x^*}^2_\bH\,dt\le \abs{S^*x^*}^2_{\mathfrak{H}}
    \]

\item $R_\Phi$ maps $L^2(0,T;\bH)$ into $\bE$ and $R_\phi\in\gamma(L^2(0,T;\bH);\bE).$
\end{enumerate}
Moreover, for all $y\in\bH$ the function $\Phi(\cdot)y$ is stochastic integrable with respect to $W(\cdot)y$ and we have the series representation
\[
\int_0^T\Phi(t)\,dW(t)=\sum_{n=1}^\infty\int_0^T\Phi(t)e_n\,dW(t)e_n
\]
where $(e_n)_{n\geq 1}$ is any orthonormal basis for $\bH.$ The series converges $\Prob-$a.s and in $L^p(\Omega;\bE)$ for all $p\in [0,\infty).$ The measure $\mu$ is the distribution of $\int_0^T\Phi(t)\,dW(t)$ and we have the isometry
\[
\Exp\Bigl|\int_0^T\Phi(t)\,dW(t)\Bigr|_\bE=\norm{R_\Phi}_{\gamma(L^2(0,T;\bH);\bE)}
\]
\end{thm}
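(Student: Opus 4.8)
This is a cited result of van Neerven and Weis, so I only sketch the line of argument. Write $R_\Phi^*\colon\bE^*\to L^2(0,T;\bH)$, $x^*\mapsto\Phi(\cdot)^*x^*$, for the adjoint of $R_\Phi$, so that $\norm{R_\Phi^*x^*}_{L^2(0,T;\bH)}^2=\int_0^T\abs{\Phi(t)^*x^*}_\bH^2\,dt$ for every $x^*\in\bE^*$. The plan is to prove $(3)\Leftrightarrow(4)\Leftrightarrow(5)$ by short factorization arguments, to close the circle with $(5)\Rightarrow(2)\Rightarrow(3)$, and then to deduce $(1)\Leftrightarrow(2)$ and the final two formulas.

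For $(5)\Rightarrow(4)$ take $\mathfrak{H}=L^2(0,T;\bH)$ and $S=R_\Phi$, with equality in the inequality of $(4)$; for $(5)\Rightarrow(3)$ recall that $R_\Phi\in\gamma(L^2(0,T;\bH),\bE)$ forces $R_\Phi R_\Phi^*$ to be the covariance of a centered Gaussian measure $\mu$, and $\seq{R_\Phi R_\Phi^*x^*,x^*}=\int_0^T\abs{\Phi(t)^*x^*}_\bH^2\,dt$. For $(3)\Rightarrow(4)$ use the factorization $Q=i_\mu\circ i_\mu^*$: since $i_\mu i_\mu^*=Q$ is a Gaussian covariance, $i_\mu\in\gamma(H_\mu,\bE)$, so $\mathfrak{H}=H_\mu$, $S=i_\mu$ work, again with equality. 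For $(4)\Rightarrow(5)$, the domination $\norm{R_\Phi^*x^*}_{L^2(0,T;\bH)}\le\norm{S^*x^*}_{\mathfrak{H}}$ shows that the assignment $S^*x^*\mapsto R_\Phi^*x^*$ extends to a linear contraction $C\colon\mathfrak{H}\to L^2(0,T;\bH)$ with $R_\Phi^*=CS^*$; taking adjoints gives $R_\Phi=SC^*$ (using that the adjoint of $R_\Phi^*$, viewed into $\bE^{**}$, is $R_\Phi$ itself), so $R_\Phi$ maps $L^2(0,T;\bH)$ into $\bE$ and, by the operator-ideal property of $\gamma$-radonifying operators, $R_\Phi=SC^*\in\gamma(L^2(0,T;\bH),\bE)$. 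This last step is essentially Proposition \ref{vn1Prop1.1} in disguise.

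Next, $(5)\Rightarrow(2)$ (hence $(1)$): the isometry $I_T\colon L^2(0,T;\bH)\to L^2(\Omega)$ sends any orthonormal basis $(f_n)_n$ of $L^2(0,T;\bH)$ to a sequence $(I_Tf_n)_n$ of independent standard Gaussians, and $I_T(\Phi(\cdot)^*x^*)=\int_0^T\Phi(t)^*x^*\,dW(t)$. Since membership of $R_\Phi$ in $\gamma(L^2(0,T;\bH),\bE)$ and the value of $\Exp\abs{\sum_n\gamma_nR_\Phi f_n}_\bE^2$ do not depend on the Gaussian sequence or on the basis, $Y:=\sum_n(I_Tf_n)\,R_\Phi f_n$ converges in $L^2(\Omega;\bE)$, and testing against $x^*$ with Parseval yields $\seq{Y,x^*}=I_T(R_\Phi^*x^*)=\int_0^T\Phi(t)^*x^*\,dW(t)$, i.e. $(2)$. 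For $(2)\Rightarrow(3)$: $\bE$ being separable, the law $\mu$ of $Y$ is Radon, and each $\seq{Y,x^*}=\int_0^T\Phi(t)^*x^*\,dW(t)$ is a centered Wiener integral, hence $\N(0,\int_0^T\abs{\Phi(t)^*x^*}_\bH^2\,dt)$; thus $\mu$ is centered Gaussian with covariance $Q$ satisfying $\seq{Qx^*,x^*}=\int_0^T\abs{\Phi(t)^*x^*}_\bH^2\,dt$. Finally $(1)\Leftrightarrow(2)$: one direction is the case $A=(0,T)$; conversely, from $(2)$ we have $(5)$, and for measurable $A\subseteq(0,T)$ the function $\mathbf{1}_A\Phi$ belongs scalarly to $L^2(0,T;\bH)$ with $R_{\mathbf{1}_A\Phi}=R_\Phi\circ M_{\mathbf{1}_A}\in\gamma(L^2(0,T;\bH),\bE)$ ($M_{\mathbf{1}_A}$ the bounded multiplication operator), so $(5)\Rightarrow(2)$ applied to $\mathbf{1}_A\Phi$ furnishes $Y_A$. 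The series representation follows by taking $(f_n)$ to be products of an orthonormal basis of $L^2(0,T)$ with the basis $(e_n)$ of $\bH$, and the isometry $(\Exp\abs{\int_0^T\Phi\,dW}_\bE^2)^{1/2}=\norm{R_\Phi}_{\gamma(L^2(0,T;\bH),\bE)}$ is immediate from the definition of the $\gamma$-norm.

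The main obstacle is the step $(4)\Rightarrow(5)$ — equivalently $(3)\Rightarrow(5)$ — where the scalar domination has to be upgraded to the structural statements that $R_\Phi$, a priori only $\bE^{**}$-valued, actually takes values in $\bE$ and is $\gamma$-radonifying; this is where the contraction factorization (Proposition \ref{vn1Prop1.1}), the fact that Cameron--Martin embeddings $i_\mu$ are $\gamma$-radonifying, and the operator-ideal property all come together. Separability of $\bE$ is used essentially in $(2)\Rightarrow(3)$ to guarantee the law of $Y$ is Radon, so that it genuinely has a covariance operator in $\Lin(\bE^*,\bE)$.
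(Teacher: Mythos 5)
The paper does not actually prove this statement --- it is quoted directly from van Neerven and Weis \cite{vnw1} --- and your sketch follows the same standard domination/factorization route as that source ($R_\Phi^*=CS^*$ with $C$ a contraction, the ideal property, and the fact that $RR^*$ Gaussian-covariance characterizes $\gamma$-radonification), so it is sound and consistent with the cited proof. The only glossed points are routine: in $(4)\Rightarrow(5)$ one extends $C$ by zero on the orthogonal complement of $\overline{\range S^*}$, and the almost sure and $L^p(\Omega;\bE)$ convergence of the series representation requires the It\^o--Nisio theorem and Fernique's theorem rather than just the $L^2$ convergence you invoke.
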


We conclude this section with a sufficient condition for stochastic integrability in spaces of type 2 (see e.g. van Neerven and Weis \cite[Theorem 4.7]{vnw1} or  \cite[Theorem 5.1]{vnw2}).
\begin{defn}
$\bE$ is said to be of \emph{type} $2$ iff there exists $K_2>0$ such that
\begin{equation}
 \Exp\Bigl| \sum_{i=1}^n \epsilon_i x_i\Bigr|_\bE^2 \le K_2\sum_{i=1}^n | x_i |_\bE^2
\end{equation}
for any finite sequence $\{x_i\}_{i=1}^n$ of elements of $\bE$ and for any finite sequence $\{\epsilon_i\}_{i=1}^n$ of $\{-1,1\}-$valued symmetric i.i.d. random variables.
\end{defn}

\begin{thm}\label{vnw1thm4.7}
Let $\bE$ be a separable real Banach space of type 2. If $\Phi:(0,T)\to \Lin(\bH,\bE)$ belongs scalarly to $L^2(0,T;\bH),$ for almost all $t\in(0,T)$ we have $\Phi(t)\in\gamma(\bH,\bE),$ and
\[
\int_0^T \norm{\Phi(t)}_{\gamma(\bH,\bE)}^2\,dt<\infty
\]
then $\Phi$ is stochastically integrable with respect to $W(\cdot)$ and
\[
\Exp\Bigl|\int_0^T\Phi(t)\,dW(t)\Bigr|^2_\bE\le K_2^2\int_0^T \norm{\Phi(t)}_{\gamma(\bH,\bE)}^2\,dt.
\]
\end{thm}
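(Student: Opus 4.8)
The plan is to read the statement off Theorem~\ref{vnw1thm4.2}: by the equivalence $(1)\Leftrightarrow(5)$ there, together with the accompanying isometry, it suffices to show that $R_\Phi$ maps $L^2(0,T;\bH)$ into $\bE$, that $R_\Phi\in\gamma(L^2(0,T;\bH),\bE)$, and that
\[
\norm{R_\Phi}_{\gamma(L^2(0,T;\bH),\bE)}^2\le K_2^2\int_0^T\norm{\Phi(t)}_{\gamma(\bH,\bE)}^2\,dt .
\]
That $R_\Phi$ takes values in $\bE$ follows from the remark preceding Theorem~\ref{vnw1thm4.2}, since $\bE$ is separable and each $\Phi(\cdot)y$, $y\in\bH$, is weakly --- hence, by the Pettis theorem, strongly --- measurable. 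The $\gamma$-radonifying bound I would establish first for step functions and then obtain in the general case by approximation.

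\emph{Step functions.} Let $\Phi=\sum_{i=1}^n\mathbf 1_{(t_{i-1},t_i]}\Phi_i$ with $0=t_0<\dots<t_n=T$ and $\Phi_i\in\gamma(\bH,\bE)$, so that $R_\Phi f=\sum_{i=1}^n\Phi_i\bigl(\int_{t_{i-1}}^{t_i}f(t)\,dt\bigr)$. Fix an orthonormal basis $(e_m)_m$ of $\bH$ and an orthonormal basis of $L^2(0,T)$ obtained by completing the normalized indicators $(t_i-t_{i-1})^{-1/2}\mathbf 1_{(t_{i-1},t_i]}$, $i=1,\dots,n$, with functions each supported in a single subinterval $(t_{i-1},t_i]$ and having mean zero there; tensoring yields an orthonormal basis of $L^2(0,T;\bH)$ on which $R_\Phi$ annihilates every mean-zero basis vector and sends $(t_i-t_{i-1})^{-1/2}\mathbf 1_{(t_{i-1},t_i]}\otimes e_m$ to $(t_i-t_{i-1})^{1/2}\Phi_i e_m$. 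Hence, with i.i.d.\ standard Gaussians $(\gamma_{i,m})$ and $Y_i:=\sum_m\gamma_{i,m}\Phi_i e_m$ --- a series converging in $L^2(\Omega;\bE)$ because $\Phi_i\in\gamma(\bH,\bE)$, with the $Y_i$ independent, centered, and $\Exp\abs{Y_i}_\bE^2=\norm{\Phi_i}_{\gamma(\bH,\bE)}^2$ ---
\[
\norm{R_\Phi}_{\gamma(L^2(0,T;\bH),\bE)}^2=\Exp\abs{\sum_{i=1}^n(t_i-t_{i-1})^{1/2}Y_i}_\bE^2 .
\]
Since the summands are independent and symmetric, this equals $\Exp\abs{\sum_{i=1}^n\epsilon_i(t_i-t_{i-1})^{1/2}Y_i}_\bE^2$ for a Rademacher sequence $(\epsilon_i)$ independent of $(Y_i)$; applying the type-$2$ inequality conditionally on the values of $(Y_i)$ gives
\[
\norm{R_\Phi}_{\gamma(L^2(0,T;\bH),\bE)}^2\le K_2\sum_{i=1}^n(t_i-t_{i-1})\,\Exp\abs{Y_i}_\bE^2=K_2\int_0^T\norm{\Phi(t)}_{\gamma(\bH,\bE)}^2\,dt .
\]

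\emph{General $\Phi$ and conclusion.} The finite-rank truncations $\Phi(\cdot)P_N$, with $P_N$ the orthogonal projection of $\bH$ onto $\mathrm{span}(e_1,\dots,e_N)$, are strongly $\gamma(\bH,\bE)$-measurable and converge in $\gamma(\bH,\bE)$ to $\Phi(\cdot)$ for a.e.\ $t$; hence $\Phi$ is strongly $\gamma(\bH,\bE)$-measurable and the hypothesis says exactly $\Phi\in L^2(0,T;\gamma(\bH,\bE))$. Choosing step functions $\Phi_k\to\Phi$ in this space, the step-function estimate shows $(R_{\Phi_k})_k$ is Cauchy in the Banach space $\gamma(L^2(0,T;\bH),\bE)$; since $R_{\Phi_k}\to R_\Phi$ also in $\Lin(L^2(0,T;\bH),\bE)$ --- because $\norm{R_{\Phi_k}f-R_\Phi f}_\bE\le\bigl(\int_0^T\norm{\Phi_k(t)-\Phi(t)}_{\Lin(\bH,\bE)}^2\,dt\bigr)^{1/2}\norm{f}_{L^2(0,T;\bH)}$ and $\gamma(L^2(0,T;\bH),\bE)\embed\Lin(L^2(0,T;\bH),\bE)$ --- the $\gamma$-limit must be $R_\Phi$. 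Letting $k\to\infty$ gives $R_\Phi\in\gamma(L^2(0,T;\bH),\bE)$ with
\[
\norm{R_\Phi}_{\gamma(L^2(0,T;\bH),\bE)}^2\le K_2\int_0^T\norm{\Phi(t)}_{\gamma(\bH,\bE)}^2\,dt\le K_2^2\int_0^T\norm{\Phi(t)}_{\gamma(\bH,\bE)}^2\,dt ,
\]
using $K_2\ge1$. Theorem~\ref{vnw1thm4.2} then yields stochastic integrability of $\Phi$ and, via its isometry, the asserted bound on $\Exp\abs{\int_0^T\Phi(t)\,dW(t)}_\bE^2$. The only genuinely probabilistic point --- and the one to be careful with --- is the symmetrization step: the type-$2$ inequality must be applied to the $\bE$-valued random sum $\sum_i\epsilon_i\bigl((t_i-t_{i-1})^{1/2}Y_i\bigr)$ \emph{after} conditioning on the Gaussians, not to a scalar rearrangement; everything else (the orthonormal-basis computation of $R_\Phi$, the measurability check, and completeness of $\gamma(L^2(0,T;\bH),\bE)$) is routine.
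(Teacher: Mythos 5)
Your argument is correct, but it is worth noting that the paper itself offers no proof at all at this point: it simply cites Theorem 5.1 of van Neerven and Weis \cite{vnw2}. What you have written is, in essence, a self-contained reconstruction of the standard argument from that reference: reduce via the equivalence $(1)\Leftrightarrow(5)$ of Theorem \ref{vnw1thm4.2} to showing $R_\Phi\in\gamma(L^2(0,T;\bH),\bE)$ with the right bound; verify the bound for step functions by choosing an orthonormal basis of $L^2(0,T;\bH)$ adapted to the partition, symmetrizing the independent Gaussian blocks $Y_i$ and applying the type-2 inequality conditionally; then pass to general $\Phi$ by establishing strong $\gamma(\bH,\bE)$-measurability (finite-rank truncations plus Pettis) and approximating in $L^2(0,T;\gamma(\bH,\bE))$, identifying the $\gamma$-limit with $R_\Phi$ through the embedding $\gamma\embed\Lin$. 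All the individual steps you invoke (the symmetrization identity, $RP_N\to R$ in $\gamma$-norm, density of step functions in the Bochner space, completeness of $\gamma(L^2(0,T;\bH),\bE)$) are standard and used correctly, and your care about applying type 2 only after conditioning on the Gaussians is exactly the right point to flag. Compared with the paper, your route buys a self-contained proof and in fact yields the sharper constant $K_2$ in place of $K_2^2$ (with the paper's normalization of the type-2 constant at the level of second moments), which you then legitimately weaken using $K_2\ge 1$ to match the stated inequality; the paper's citation, of course, buys brevity and defers these measurability and approximation technicalities to the original source.
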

\begin{proof}
See Theorem 5.1 in van Neerven and Weis \cite{vnw2}.
\end{proof}

\section{Parabolic evolution families}
Since there is no unified theory for parabolic evolution families and non-autonomous evolution equations, we restrict in this paper to the class of parabolic problems and setting introduced by P. Acquistapace and B. Terreni in \cite{AcquiTer87}. We start this section by recalling the definition of positive operators with bounded imaginary powers.

\begin{defn}
Let $A$ be a closed, densely defined linear operator on a Banach space $\bE.$ We say that $A$ is \emph{positive} if $(-\infty,0]\subset \rho(A)$ and there exists $C\geq 1$ such that
\[
\Vertt (\lambda I+A)^{-1}\Vertt_{\Lin(\bE)} \le  \frac{C}{1+\lambda}, \ \ \ \text{ for all }\lambda\geq 0.
\]
\end{defn}

\begin{rem}
It is well known that if $A$ is a positive operator on $\bE$,  then $A$ admits (not necessarily bounded) fractional powers $A^z$ of any order $z\in\mathds{C}$  (see e.g. Amann \cite[Chapter III, Section 4.6]{amann}). Recall that, in particular, for $\abs{\Re z}\le 1$ the fractional power $A^z$ is the closure of the linear mapping
\[
D(A)\ni x\mapsto \frac{\sin \pi z}{\pi z}\int_0^{+\infty} t^z(tI+A)^{-2}Ax\,dt\in\bE,
\]
Moreover, if $\Re z\in (0,1)$,  then $A^{-z}\in\Lin(\bE)$ and we have
\[
A^{-z}x=\frac{\sin \pi z}{\pi}\int_0^{+\infty}t^{-z}(tI+A)^{-1}x\,dt
\]
see e.g. Amann \cite[p. 153]{amann}.
\end{rem}
\begin{defn}\label{defbip}
The class $\BIP(\theta,\bE)$ of operators with \emph{bounded imaginary powers} on $\bE$ with parameter $\theta\in[0,\pi)$ is defined as the class of positive operators $A$ on $\bE$ with the property that $A^{is}\in \Lin(\bE)$ for all $s\in\R$ and there exists a constant $K>0$ such that
\[
\Vertt A^{is} \Vertt_{\Lin(\bE)} \le K e^{\theta |s|}, \; s \in \R.
\]
\end{defn}
For each $t\in [0,T]$ let $A(t)$ be a densely defined closed linear operator on a Banach space $\bE.$ For each $s\in[0,T],$ consider the following non-autonomous Cauchy problem
\begin{equation}\label{NACP1}
\begin{split}
y'(t)+A(t)y(t)&=0, \ \ \ t\in (s,T]\\
y(s)&=x\in\bE.
\end{split}
\end{equation}
\begin{defn}
We say that $y\in\mC((s,T];\bE)\cap\mC^1((s,T];\bE)$ is a \emph{classical} solution of (\ref{NACP1}) if $y(t)\in D(A(t))$ for all $t\in (s,T]$ and (\ref{NACP1}) holds.
\end{defn}
\begin{defn}
We say that a classical solution $y$ of (\ref{NACP1}) is also a \emph{strict} solution  if in addition $y\in \mC^1([s,T];\bE),$ $x\in D(A(s))$ and $A(t)y(t)\to A(s)x$ as $t\to s.$
\end{defn}
We say that condition \textbf{(AT)} is satisfied if the two following conditions hold

\noindent\textbf{(AT1)} \emph{There exist constants $w\in\R, \ K\geq 0$ and $\phi\in(\frac{\pi}{2},\pi)$ such that
  \[
  \Sigma(\phi,w):=\set{w}\cup\set{\lambda\in\mathds{C}\setminus\set{w}:\abs{\arg(\lambda-w)}\le\phi}\subset \rho(-A(t))
  \]
  and for all $\lambda\in\Sigma(\phi,w)$ and $t\in [0,T],$
\[
|\!|(A(t)+\lambda I)^{-1}|\!|_{\Lin(\bE)}\le\frac{K}{1+\abs{\lambda-w}}.
\]
}
\noindent\textbf{(AT2)} \emph{There exist constants $L \geq 0$ and $\mu, \nu \in (0,1)$ with $\mu+\nu>1$ such that for all $\lambda\in\Sigma(\phi,0)$ and $s,t\in[0,T],$
\[
\left|\!\left|A_w(t)(A_w(t)+\lambda I)^{-1}[A_w(t)^{-1}-A_w(s)^{-1}]\right|\!\right|_{\Lin(\bE)}\le L\frac{\abs{t-s}^\mu}{(\abs{\lambda}+1)^\nu},
\]
where $A_w(t):=A(t)+wI.$
}

Operators satisfying \textbf{(AT1)} are called \emph{sectorial} (of type $(\phi,K,w)$). Equation (\ref{NACP1}) is called parabolic because of the sectoriality of the operators $A(t).$

If Assumption \textbf{(AT1)} is satisfied and the domains are constant i.e. $D(A(t))=D(A(0))$ for all $t\in [0,T],$ and the map $[0,T]\ni t\mapsto A(t)\in \Lin(D(A(0)),\bE)$ is H\"{o}lder continuous with exponent $\eta,$ then \textbf{(AT2)} is satisfied with $\mu=\eta$ and $\nu=1,$ see e.g. Acquistapace and Terreni \cite[Section 7]{AcquiTer87}. In this case such conditions reduce to the theory of Sobolevskii and Tanabe for constant domains (see e.g. Pazy \cite{pazy} or Tanabe \cite{tanabe}).

\begin{defn}
In what follows we denote $\mathfrak{T}:=\set{(t,s)\in[0,T]^2:s\le t}.$ A family of bounded operators $\{S(t,s)\}_{(t,s)\in\mathfrak{T}}$
on $\bE$ is called a \emph{strongly continuous evolution family } if the following hold
\begin{enumerate}
\item $S(t,t)=I,$ for all $t\in [0,T].$
\item $S(t,s)=S(t,r)S(r,s)$ for all $0\le s\le r\le t\le T.$
\item The mapping $\mathfrak{T}\ni(t,s)\mapsto S(t,s)\in\Lin(\bE)$ is strongly continuous.
\end{enumerate}
\end{defn}
We say that the family $\{S(t,s)\}_{(t,s)\in\mathfrak{T}}$
\emph{solves} non-autonomous Cauchy problem (\ref{NACP1}) if there
exist a family $(Y_s)_{s\in[0,T]}$ of dense subspaces of $\bE$
such that for all $(s,t)\in\mathfrak{T}$ we have
\[
S(t,s)Y_s\subset Y_t\subset D(A(t))\]
and the map
$y(t)=S(t,s)x$ is a strict solution of (\ref{NACP1}) for every
$x\in Y_s.$ In this case we say that $\set{-A(t),D(A(t))}_{t\in
[0,T]}$ (or simply $\set{-A(t)}_{t\in [0,T]})$ generates the evolution family
$\{S(t,s)\}_{(t,s)\in\mathfrak{T}}.$

Under the condition \textbf{(AT)} we have the following well-known result (see Acquistapace and Terreni \cite[Theorems 6.1-6.4]{AcquiTer87} and Yagi \cite[Theorem 2.1]{yagi}).
\begin{thm}
If condition {\rm\textbf{(AT)}} holds then there exists a unique strongly continuous evolution family $\{S({t,s})\}_{(t,s)\in\mathfrak{T}}$ that solves the non-autonomous Cauchy problem $(\ref{NACP1})$ with $Y_t=D(A(t))$ and for all $x\in\bE,$ the map $y(t)=S(t,s)x$
is a classical solution of $(\ref{NACP1}).$ Moreover, $\{S({t,s})\}_{(t,s)\in\mathfrak{T}}$ is continuous on $0\le s<t\le T$ and there exists a constant $C>0$ such that for all $0\le s<t\le T$ and $\theta\in[0,1],$
\begin{align*}
\bigl|\!\bigl|(A(t)+wI)^\theta S(t,s)\bigr|\!\bigr|_{\Lin(\bE)}&\le C(t-s)^{-\theta}\\
\bigl|\!\bigr|S(t,s)-e^{-(t-s)A(s)}\bigr|\!\bigr|_{\Lin(\bE)}&\le C(t-s)^{\mu+\nu-1}
\end{align*}
Moreover, for all $\theta\in (0,\mu)$ and $x\in D\left((A(t)+w I)^\theta\right)$ we have
\begin{equation}\label{mutheta}
\abs{S(t,s)(A(t)+w I)^\theta x}_{\bE}\le C(\mu-\theta)^{-1}(t-s)^{-\theta}\abs{x}_\bE.
\end{equation}
\end{thm}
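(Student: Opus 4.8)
The plan is to construct $S(t,s)$ by the classical parametrix (Levi) method of Acquistapace and Terreni; the same representation then delivers the quantitative estimates. After replacing $A(t)$ by $A_w(t)=A(t)+wI$ — which multiplies the evolution family by $e^{w(t-s)}$ and alters the asserted estimates only by harmless constant factors, all of them being phrased via $A_w$ — we may assume $w=0$, so that by \textbf{(AT1)} each $-A(t)$ generates a uniformly bounded analytic semigroup $(e^{-\tau A(t)})_{\tau\ge0}$ with the usual smoothing estimates $\Vertt A(t)^{\theta}e^{-\tau A(t)}\Vertt_{\Lin(\bE)}\le C\,\tau^{-\theta}$, $\theta\ge0$, uniformly in $t\in[0,T]$. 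Take the frozen-coefficient approximation $U_0(t,s):=e^{-(t-s)A(s)}$; differentiating, its defect is
\[
\partial_t U_0(t,s)+A(t)U_0(t,s)=-R_1(t,s),\qquad R_1(t,s):=\bigl(A(s)-A(t)\bigr)e^{-(t-s)A(s)}=A(t)\bigl(A(t)^{-1}-A(s)^{-1}\bigr)A(s)\,e^{-(t-s)A(s)},
\]
and, writing $A(s)e^{-(t-s)A(s)}$ as a Dunford integral over the sectoriality contour and inserting \textbf{(AT2)} through the resolvent identity, one gets the weakly singular bound $\Vertt R_1(t,s)\Vertt_{\Lin(\bE)}\le C\,(t-s)^{\mu+\nu-2}$ in which $\mu+\nu-2>-1$.

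Next I would solve the Volterra equation $R=R_1+R_1\ast R$, i.e. $R(t,s)=R_1(t,s)+\int_s^t R_1(t,r)R(r,s)\,dr$, by the Neumann series $R=\sum_{n\ge1}R_n$ with $R_{n+1}:=R_1\ast R_n$; with $\gamma:=\mu+\nu-1\in(0,1)$, an induction on the Beta identity $\int_s^t(t-r)^{a-1}(r-s)^{b-1}\,dr=\frac{\Gamma(a)\Gamma(b)}{\Gamma(a+b)}(t-s)^{a+b-1}$ gives $\Vertt R_n(t,s)\Vertt_{\Lin(\bE)}\le\frac{(C_1\Gamma(\gamma))^n}{\Gamma(n\gamma)}(t-s)^{n\gamma-1}$, so the series converges uniformly on every set $\{(t,s):\delta\le t-s\le T\}$, $\delta>0$, with $\Vertt R(t,s)\Vertt_{\Lin(\bE)}\le C(t-s)^{\mu+\nu-2}$ and a Hölder-type modulus of continuity in $r$ near $r=t$. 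Then set
\[
S(t,s):=e^{-(t-s)A(s)}+\int_s^t e^{-(t-r)A(r)}R(r,s)\,dr.
\]
Properties (1)--(3) of an evolution family are now verified directly: $S(t,t)=I$; strong continuity from dominated convergence and the bounds above; and the algebraic identity $S(t,s)=S(t,r)S(r,s)$ once $y(t):=S(t,s)x$ is known to be a classical solution of $(\ref{NACP1})$, by uniqueness of such solutions (a standard consequence of parabolicity) — the same uniqueness, extended from $x\in D(A(s))$ to $x\in\bE$ by density, also yielding uniqueness of the evolution family. That $S(t,s)x\in D(A(t))$ for every $x\in\bE$ and $t>s$, and that $y$ solves $(\ref{NACP1})$, follows by differentiating the formula for $S(t,s)x$ under the integral sign; near the diagonal one splits $\int_s^t=\int_s^{(s+t)/2}+\int_{(s+t)/2}^t$ and on the second piece writes $A(t)e^{-(t-r)A(r)}R(r,s)x=A(t)e^{-(t-r)A(r)}\bigl(R(r,s)x-R(t,s)x\bigr)+A(t)e^{-(t-r)A(r)}R(t,s)x$, playing the factor $(t-r)^{-1}$ off against the Hölder continuity of $r\mapsto R(r,s)x$; the Volterra identity for $R$ then cancels the defect $-R_1(t,s)x$ exactly and leaves $\partial_t y(t)+A(t)y(t)=0$, $y(s)=x$.

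Finally, the quantitative bounds follow by inserting the parametrix formula. Directly, $\Vertt S(t,s)\Vertt_{\Lin(\bE)}\le C$ and $\Vertt S(t,s)-e^{-(t-s)A(s)}\Vertt_{\Lin(\bE)}\le\int_s^t\Vertt e^{-(t-r)A(r)}\Vertt_{\Lin(\bE)}\,\Vertt R(r,s)\Vertt_{\Lin(\bE)}\,dr\le C\int_s^t(r-s)^{\mu+\nu-2}\,dr=C(t-s)^{\mu+\nu-1}$. Taking $\theta=1$, the manipulations of the differentiation step (the first term via $A(t)e^{-(t-s)A(t)}$ plus the standard Acquistapace--Terreni estimate for $A(t)\bigl(e^{-(t-s)A(s)}-e^{-(t-s)A(t)}\bigr)$, the correction term via the near-diagonal splitting) give $\Vertt A(t)S(t,s)\Vertt_{\Lin(\bE)}\le C(t-s)^{-1}$; the moment inequality for $A(t)$ then interpolates this with $\Vertt S(t,s)\Vertt_{\Lin(\bE)}\le C$ to give $\Vertt A(t)^{\theta}S(t,s)\Vertt_{\Lin(\bE)}\le C(t-s)^{-\theta}$ for all $\theta\in[0,1]$, and $(\ref{mutheta})$ follows from the same estimates with the singular exponent kept explicit, the factor $C(\mu-\theta)^{-1}$ arising from an integral of the form $\int_0^{t-s}\sigma^{\mu-\theta-1}\,d\sigma$. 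The step I expect to be the main obstacle is the near-diagonal analysis used both to identify $y$ as a classical solution and to bound $A(t)S(t,s)$: showing that the correction integral takes values in $D(A(t))$ and is differentiable in $t$ up to $t=s$ requires both the sharp weakly singular bound on $R_1$ and its Hölder regularity, and it is precisely here that the hypothesis $\mu+\nu>1$ — rather than merely $\mu,\nu>0$ — is indispensable, since it is what makes $R_1$, and hence the entire Neumann series, integrable across the diagonal.
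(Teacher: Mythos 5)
The paper does not prove this theorem at all: it is quoted verbatim from the literature, with the proof delegated to Acquistapace--Terreni \cite{AcquiTer87} (Theorems 6.1--6.4) and Yagi \cite{yagi} (Theorem 2.1). Your sketch is a correct outline of precisely the parametrix (Levi) construction used in those references --- the frozen-coefficient approximation $e^{-(t-s)A(s)}$, the weakly singular defect kernel $R_1$ controlled by \textbf{(AT2)}, the Volterra/Neumann-series resolution with the Beta-function induction, and the moment-inequality interpolation for the $\theta\in[0,1]$ smoothing estimate --- so it takes essentially the same route as the paper's (cited) proof, with the only thin spot being the one-line justification of $(\ref{mutheta})$, which in the sources requires a genuine commutation of the fractional power past the parametrix via \textbf{(AT2)} and is where the restriction $\theta<\mu$ and the factor $(\mu-\theta)^{-1}$ actually originate, exactly as you indicate.
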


\section{Backward Ornstein-Uhlenbeck transition evolution\\operators}
Let $\set{-A(t)}_{t\in [0,T]}$ be the generator of an evolution family $\{S(t,s)\}_{(t,s)\in\mathfrak{T}}$ on $\bE$ and let $\set{G(t)}_{t\in [0,T]}$ be closed operators from a constant domain $D(G)\subset\bH$ into $\bE.$ We start this section by discussing the existence of mild solutions to the non-autonomous linear stochastic equation
\begin{equation}\label{linz}
\begin{split}
dZ(t)+A(t)Z(t)\,dt&=G(t)\,dW(t), \ \ \ \ t\in [s,T],\\
Z(s)&=x_0\in\bE,
\end{split}
\end{equation}
with moving time origin $s\in[0,T]$ and initial data $x_0\in\bE.$

\begin{defn}
We say that an $\bE-$valued process $Z(\cdot)$ is a \emph{mild solution} of (\ref{linz}) if for all $(t,s)\in\mathfrak{T}$ the mapping $S(t,s) G(s)$ has a continuous extension to a bounded operator from $\bH$ into $\bE,$ which we will also denote by $S(t,s) G(s),$ such that the operator-valued function $(s,t)\ni r \mapsto S(t,r)G(r)\in\Lin(\bH,\bE)$ is stochastically integrable on the interval $(s,t)$ and
\[
Z(t)=S(t,s)x_0+\int_s^t S(t,r)G(r)\,dW(r), \ \ \ \Prob-\text{a.s}
\]
\end{defn}
We know from Theorem \ref{vnw1thm4.2} that existence of a mild solution for (\ref{linz}) follows from the following condition
\begin{Assumption}\label{assu1}
For each $(t,s)\in\mathfrak{T}$ the mapping $S(t,s)G(s):D(G)\to\bE$ extends to a bounded linear operator from $\bH$ into $\bE,$ also denoted by $S(t,s) G(s),$ such that the positive symmetric operator $Q_{t,s}\in\Lin(\bE^*,\bE)$ defined by
\begin{equation}\label{Qtsdef}
\seq{Q_{t,s}x^*,y^*}:=\int_s^t\seq{S(t,r)G(r)(S(t,r)G(r))^*x^*,y^*}\,dr, \ \ \ x^*,y^*\in\bE^*
\end{equation}
is the covariance operator of a centered Gaussian measure $\mu_{t,s}$ on $\bE.$
\end{Assumption}

\noindent\textbf{Notation.} For each $(t,s)\in\mathfrak{T},$ let $(H_{t,s,}[\cdot,\cdot]_{H_{t,s}})$ denote the Reproducing Kernel Hilbert Space associated with the positive symmetric operator $Q_{t,s}$ defined by (\ref{Qtsdef}), and let $i_{t,s}$ denote the inclusion mapping from $H_{t,s}$ into $\bE.$

\begin{ex}\label{ex1}
Let $\bE$ be a type-2 Banach space and suppose that for each $(t,s)\in\mathfrak{T}$ we have $S(t,s)G(s)\in\gamma(\bH,\bE)$ and
\begin{equation}\label{stg2gamma}
\int_0^T|\!|S(t,s) G(s)|\!|^2_{\gamma(\bH,\bE)}\,dt<+\infty.
\end{equation}
Then, by Theorem \ref{vnw1thm4.7}, Assumption \ref{assu1} holds.
\end{ex}

For the next example, consider the following linear parabolic second-order stochastic PDE perturbed by additive space-time white noise on $[0,T]\times (0,1),$
\begin{equation}\label{exlinearspde}
\begin{split}
\frac{\partial X}{\partial t}(t,\xi)+(\mathcal{A}_t X)(t,\xi)&=g(t,\xi)\,\frac{\partial w}{\partial t}(t,\xi),\\
X(t,0)=X(t,1)&=0,\\
X(0,\cdot)&=x_0(\cdot),
\end{split}
\end{equation}
where, for each $t\in [0,T],$ $\A_t$ is the second-order differential operator
\[
(\mathcal{A}_t x)(\xi):=-a(t,\xi)\frac{d^2x}{d\xi^2}(\xi)+b(t,\xi)\frac{d x}{d\xi}(\xi)+c(t,\xi)x(\xi), \ \ \ \xi\in (0,1)
\]
with $a,b,c\in\mathcal{C}^\mu([0,T];\mathcal{C}([0,1]))$ and $a\in\mathcal{C}^\varepsilon([0,1];\mathcal{C}([0,T]))$ for $\mu\in (\frac{1}{4},1]$  and $\varepsilon>0$ fixed. Assume further that $\inf_{t\in [0,T], \xi\in [0,1]}a(t,\xi)>0.$

For $p\geq 2$ and $t\in [0,T],$ let $A_p(t)$ denote the realization in $L^p(0,1)$ of $\mathcal{A}_t$ with zero-Dirichlet boundary conditions,
\begin{align*}
    D(A_p(t))&:=H^{2,p}(0,1)\cap H_0^{1,p}(0,1),\\
    A_p(t)&:=\mathcal{A}_t.
\end{align*}
It is well-known that for $w$ sufficiently large, the operator $A_p(\cdot)+w I$ satisfies \textbf{(AT)} with parameters $\mu$ and $\nu=1$ (see e.g. Acquistapace and Terreni \cite{AcquiTer87} or Tanabe \cite{tanabe}). We will assume for simplicity and without loss of generality that $w=0.$

Let $\set{S_p(t,s)}_{(t,s)\in\mathfrak{T}}$ denote the family of evolution operators generated by $\set{-A_p(t)}_{t\in [0,T]}.$ Let $g\in L^1(0,T;L^\infty(0,1))$ be fixed and define, for almost every $t\in [0,T],$ the multiplication operators from $L^2(0,1)$ into $L^p(0,1)$ as follows
\begin{align*}
    D(G(t))&:=L^p(0,1)\subset L^2(0,1)\\
    G(t)y&:=\set{(0,1)\ni\xi\mapsto g(t,\xi)y(\xi)\in\R}.
\end{align*}
Notice that $G(t)$ is not a bounded operator unless $p=2.$ However, we can prove the following
\begin{ex}\label{ex2}
For each $(t,s)\in\mathfrak{T}$ the map $S_p(t,s)G(s)$ can be extended to bounded operator from $L^2(0,1)$ into $L^p(0,1)$ that is $\gamma-$radonifying and satisfies (\ref{stg2gamma}). Since $L^p(0,1)$ has type-2 for $p\geq 2,$ from Example \ref{ex1} it follows that Assumption \textbf{\ref{assu1}} holds for $\set{S_p(t,s)G(s)}_{(t,s)\in\mathfrak{T}}$ with $\bH=L^2(0,1)$ and $\bE=L^p(0,1)$ with $p\geq 2.$ Equivalently, equation (\ref{exlinearspde}) has a mild solution in $L^p(0,1).$
\begin{proof}
The argument of the proof follows closely ideas from Veraar and Zimmerschied \cite[Section 5]{veraarzimmer1}. We show first that if $\sigma>\frac{1}{4}$ then $A_p(t)^{-\sigma}$ extends to a bounded operator from $L^2(0,1)$ into $L^p(0,1)$, which we also denote by $A_p(t)^{-\sigma},$ such that
\begin{equation}\label{Apgamma}
A_p(t)^{-\sigma}\in\gamma\left(L^2(0,1),L^p(0,1)\right).
\end{equation}
We know from Example \ref{ex0} that the identity operator on $D(\Delta_2)$ extends to a continuous embedding $j:D(\Delta_2)\embed D(\Delta_p^{1-\sigma})$ which is $\gamma-$radonifying. Moreover, the family of operators
\[
\set{A_p(t)A_p(s)^{-1}:s,t,\in[0,T]}
\]
is uniformly bounded in $\Lin(L^p(0,1))$ (see e.g. Tanabe \cite[Section 5.2]{tanabe}). This implies, in particular, that both domains $D(A_p(t))$ and $D(A_p(0))$ coincide with equivalent norms, uniformly in $t\in [0,T].$ Since $D(A_p(0))=D(\Delta_p)$ with equivalent norms, we conclude that $D(A_p(t))=D(\Delta_p)$ with equivalent norms uniformly in $t\in [0,T].$

Using the $\varepsilon-$H\"{o}lder continuity assumption on the coefficients of $\A_t,$ it can be proved that the operators $A_p(t)$ belong to $\BIP(L^p(0,1),\phi)$ for some $\phi>0,$ see e.g. Denk et al \cite{denketal} or Pr\"{u}ss and Sohr \cite{pruesohr2}. Hence, by Theorem 1.15.3 in Triebel \cite{triebel} we have
\[
D(A_p(t)^{1-\sigma})=[L^p(0,1),D(A_p(t))]_{1-\sigma}=[L^p(0,1),D(\Delta_p)]_{1-\sigma}=D(\Delta_p^{1-\sigma})
\]
isomorphically, with equivalence in norm uniformly in $t\in [0,T].$ Therefore, by the ideal property of $\gamma\left(D(\Delta_2), D(\Delta_p^{1-\sigma})\right),$ we obtain
\[
A_p(t)^{-\sigma}=A_p(t)^{1-\sigma}jA_2(t)^{-1}\in \gamma\left(L^2(0,1),L^p(0,1)\right)
\]
with $\norm{A_p(t)^{-\sigma}}_{\gamma\left(L^2(0,1),L^p(0,1)\right)}$ uniformly bounded in $t\in [0,T].$

Now, from (\ref{mutheta}) it follows that, if $\sigma\in(0,\mu)$ then the map $S_p(t,s)A_p(s)^\sigma$ extends to a bounded operator $S_{p,\sigma}(t,s)$ on $L^p(0,1)$ with
\[
\norm{S_{p,\sigma}(t,s)}_{\Lin(L^p(0,1))}\le C(\mu-\sigma)^{-1}(t-s)^{-\sigma}.
\]
Hence, again by the ideal property of $\gamma-$radonifying operators, we conclude that if $\sigma\in (\frac{1}{4},\mu),$ for each $(t,s)\in\mathfrak{T}$ the linear mappings
\[
S_p(t,s)G(s)=S_p(t,s)A_p(s)^\sigma A_p(s)^{-\sigma}G(s)
 \] have a continuous extension to bounded operators from $L^2(0,1)$ into $L^p(0,1)$ that are $\gamma-$radonifying and satisfy (\ref{stg2gamma}).
\end{proof}
\end{ex}
We now introduce the transition evolution operators associated with the linearized equation (\ref{linz}). Suppose that Assumptions \textbf{(AT)} and \textbf{\ref{assu1}} are satisfied. Let $\B_b(\bE)$ denote the set of Borel-measurable bounded real-valued functions on $\bE.$
\begin{defn}\label{ou}
The \emph{Ornstein-Uhlenbeck (OU) transition evolution operators} $\{P(s,t)\}_{(t,s)\in\mathfrak{T}}$
associated to equation (\ref{linz}) are defined by
\[
[P(s,t)\varphi](x):=\int_\bE\varphi(S(t,s)x+z)\,\mu_{t,s}(dz), \ \ x\in\bE, \ \varphi\in\B_b(\bE), \ \ (t,s)\in\mathfrak{T}
\]
\end{defn}
Before we discuss the smoothing property of the OU transition operators, we extend to the non-autonomous framework some results by van Neerven \cite[Section 1]{vn1} on the relation between the spaces $H_{t,s}$ for different values of $s<t.$ The first observation is the following algebraic relation between the operators $Q_{t,s},$ which is immediate from their definition
\[
Q_{t,s}=Q_{t,r}+S(t,r)Q_{r,s}S(t,r)^*, \ \ \ \ 0\le s<r<t.
\]
The following is a direct consequence of Proposition \ref{vn1Prop1.1},
\begin{prop}\label{vn1Prop1.3}
$H_{t,r}\subset H_{t,s}$ for all $0\le s<r<t.$
\end{prop}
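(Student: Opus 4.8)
The plan is to apply Proposition \ref{vn1Prop1.1} directly, using the algebraic relation between the covariance operators $Q_{t,s}$ recorded just above the statement. Recall that $H_{t,r}\subset H_{t,s}$ (as subsets of $\bE$) holds precisely when there is a constant $K>0$ with $\seq{Q_{t,r}x^*,x^*}\le K\seq{Q_{t,s}x^*,x^*}$ for all $x^*\in\bE^*$, so it suffices to establish such an inequality, and in fact we will get it with $K=1$.

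First I would fix $0\le s<r<t$ and $x^*\in\bE^*$ and write out the identity
\[
Q_{t,s}=Q_{t,r}+S(t,r)Q_{r,s}S(t,r)^*
\]
paired against $x^*$ on both sides, obtaining
\[
\seq{Q_{t,s}x^*,x^*}=\seq{Q_{t,r}x^*,x^*}+\seq{Q_{r,s}\,S(t,r)^*x^*,\,S(t,r)^*x^*}.
\]
Next I would observe that $Q_{r,s}$ is a positive operator (being the covariance operator of the Gaussian measure $\mu_{r,s}$ under Assumption \ref{assu1}; alternatively this is immediate from the defining formula (\ref{Qtsdef}) since the integrand is $\seq{S(r,u)G(u)(S(r,u)G(u))^*y^*,y^*}=\abs{(S(r,u)G(u))^*y^*}_\bH^2\ge0$). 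Hence the second term on the right is nonnegative, which yields
\[
\seq{Q_{t,r}x^*,x^*}\le\seq{Q_{t,s}x^*,x^*},\qquad x^*\in\bE^*.
\]

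Finally I would invoke Proposition \ref{vn1Prop1.1} with $Q=Q_{t,r}$, $\tilde Q=Q_{t,s}$ and $K=1$ to conclude $H_{t,r}\subset H_{t,s}$, completing the proof. There is no real obstacle here: the only point requiring a word is the nonnegativity of the cross term $\seq{Q_{r,s}\,S(t,r)^*x^*,\,S(t,r)^*x^*}$, and that is just positivity of $Q_{r,s}$ applied to the vector $S(t,r)^*x^*\in\bE^*$; the rest is the bookkeeping of the algebraic identity and a citation of Proposition \ref{vn1Prop1.1}.
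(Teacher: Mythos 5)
Your proof is correct and is exactly the argument the paper intends: the decomposition $Q_{t,s}=Q_{t,r}+S(t,r)Q_{r,s}S(t,r)^*$ plus positivity of $Q_{r,s}$ gives $\seq{Q_{t,r}x^*,x^*}\le\seq{Q_{t,s}x^*,x^*}$, and Proposition \ref{vn1Prop1.1} (with $K=1$) yields the inclusion. The paper states this as a direct consequence of Proposition \ref{vn1Prop1.1}; you have simply written out the same reasoning in full.
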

This combined with the identity $S(t,r)Q_{r,s}S(t,r)^*=Q_{t,s}-Q_{t,r},$ implies that $S(t,r)$ maps the linear subspace $\range Q_{r,s}S(t,r)^*$ of $H_{r,s}$ into $H_{t,s}.$ The next result shows that we actually have $S(t,r)H_{r,s}\subset H_{t,s}.$

\begin{thm}\label{vn1thm1.4}
For all $0\le s<r<t$ we have $S(t,r)H_{r,s}\subset H_{t,s}.$ Moreover
\[
\norm{S(t,r)}_{\Lin(H_{r,s},H_{t,s})}\le 1.
\]
\end{thm}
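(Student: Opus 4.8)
The plan is to reduce the claim to Proposition~\ref{vn1Prop1.1} applied to suitable auxiliary operators on $H_{r,s}$, in the spirit of van~Neerven's argument for the autonomous case. Recall we already know from Proposition~\ref{vn1Prop1.3} that $H_{t,r}\subset H_{t,s}$, and that the algebraic identity $S(t,r)Q_{r,s}S(t,r)^*=Q_{t,s}-Q_{t,r}$ shows $S(t,r)$ sends $\range\bigl(Q_{r,s}S(t,r)^*\bigr)\subset H_{r,s}$ into $H_{t,s}$. The first step is therefore to set up the right comparison: consider the bounded operator $R:=i_{r,s}\in\Lin(H_{r,s},\bE)$ and the composition $S(t,r)R\in\Lin(H_{r,s},\bE)$. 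I want to show this composition factors through the inclusion $i_{t,s}:H_{t,s}\embed\bE$ with norm at most one, i.e.\ that there is a contraction $H_{r,s}\to H_{t,s}$ whose composition with $i_{t,s}$ equals $S(t,r)\,i_{r,s}$.

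The key computation is a norm estimate. For $h\in H_{r,s}$ one should compare $|S(t,r)h|_{H_{t,s}}$ with $|h|_{H_{r,s}}$. By density it suffices to treat $h=Q_{r,s}x^*$ for $x^*\in\bE^*$; then $|h|_{H_{r,s}}^2=\seq{Q_{r,s}x^*,x^*}$, and $S(t,r)h=S(t,r)Q_{r,s}x^*$, which by the identity above equals $\bigl(Q_{t,s}-Q_{t,r}\bigr)$ applied appropriately — more precisely one checks $S(t,r)Q_{r,s}x^* = S(t,r)Q_{r,s}S(t,r)^*\bigl(S(t,r)^*\bigr)^{-1}x^*$ is awkward because $S(t,r)^*$ need not be invertible, so instead I would argue directly: for $y^*\in\bE^*$,
\[
\seq{S(t,r)Q_{r,s}x^*,y^*}=\seq{Q_{r,s}x^*,S(t,r)^*y^*}=[\,Q_{r,s}x^*,\,Q_{r,s}S(t,r)^*y^*\,]_{H_{r,s}}.
\]
This exhibits $S(t,r)h$ as the image under $i_{t,s}$ of an element of $H_{t,s}$ once we know $Q_{r,s}S(t,r)^*y^*\in H_{r,s}$ maps into $H_{t,s}$ — which is exactly the partial statement already recorded. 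The honest route is to use Proposition~\ref{vn1Prop1.1} on the space $H_{r,s}$ directly: define $\tilde Q := i_{r,s}^*\,S(t,r)^*\,i_{t,s}\,i_{t,s}^*\,S(t,r)\,i_{r,s}$ and $Q:=\mathrm{I}_{H_{r,s}}$, or rather compare the quadratic forms $x^*\mapsto\seq{S(t,r)Q_{r,s}S(t,r)^*x^*,x^*}$ and $x^*\mapsto\seq{Q_{t,s}x^*,x^*}$; since $Q_{t,s}-S(t,r)Q_{r,s}S(t,r)^*=Q_{t,r}$ is positive, we get $\seq{S(t,r)Q_{r,s}S(t,r)^*x^*,x^*}\le\seq{Q_{t,s}x^*,x^*}$ with constant $K=1$, and Proposition~\ref{vn1Prop1.1} yields $H_{S(t,r)Q_{r,s}S(t,r)^*}\subset H_{t,s}$ contractively.

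It then remains to identify $H_{S(t,r)Q_{r,s}S(t,r)^*}$ with (the image of) $S(t,r)H_{r,s}$ and to check that the resulting map $S(t,r):H_{r,s}\to H_{t,s}$ is well-defined (independent of representatives) and has operator norm $\le 1$. For this I would use the factorization $S(t,r)Q_{r,s}S(t,r)^* = \bigl(S(t,r)i_{r,s}\bigr)\bigl(S(t,r)i_{r,s}\bigr)^*$ and the general fact — stated in Section~2 via $Q=i_Q\circ i_Q^*$ — that the RKHS of $TT^*$ for $T\in\Lin(\mathcal H,\bE)$ is exactly $\overline{T(\mathcal H)}$ with the quotient norm $|Tx|=\inf\{|y|_{\mathcal H}: Ty=Tx\}$; applying this with $T=S(t,r)i_{r,s}$ gives $H_{S(t,r)Q_{r,s}S(t,r)^*}=\overline{S(t,r)H_{r,s}}\supset S(t,r)H_{r,s}$, and the quotient-norm description forces $|S(t,r)h|_{H_{t,s}}\le|h|_{H_{r,s}}$ after combining with the contractive inclusion from the previous step. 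I expect the main obstacle to be precisely this bookkeeping: making rigorous the passage from the inner-product inequality on $\bE^*$ to an actual bounded operator $S(t,r):H_{r,s}\to H_{t,s}$, including well-definedness when $S(t,r)i_{r,s}$ is not injective, since one must carefully distinguish $H_{r,s}$ from its image in $\bE$ and verify the contraction descends to the quotient. Everything else is a direct transcription of Proposition~\ref{vn1Prop1.1} and the factorization $Q=i_Q i_Q^*$.
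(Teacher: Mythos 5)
Your proposal is correct in substance but follows a genuinely different, more abstract route than the paper's. The paper argues directly: for $x^*\in\bE^*$ it computes $\abs{Q_{r,s}S(t,r)^*x^*}_{H_{r,s}}^2=\seq{Q_{t,s}x^*,x^*}-\seq{Q_{t,r}x^*,x^*}\le\abs{Q_{t,s}x^*}_{H_{t,s}}^2$, uses this to define, for fixed $y^*\in\bE^*$, a linear functional $\psi_{y^*}(Q_{t,s}x^*):=\seq{Q_{r,s}S(t,r)^*x^*,y^*}$ on $\range Q_{t,s}$, shows it is bounded on $H_{t,s}$ with norm $\le\abs{Q_{r,s}y^*}_{H_{r,s}}$, identifies its Riesz representative with $S(t,r)Q_{r,s}y^*$, and concludes by density of $Q_{r,s}(\bE^*)$ in $H_{r,s}$. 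You instead reduce the theorem to two abstract RKHS facts: (i) a quantitative form of Proposition \ref{vn1Prop1.1} (domination of covariances with constant $K$ gives an embedding of the RKHSs of norm $\le\sqrt{K}$), applied with $K=1$ to $S(t,r)Q_{r,s}S(t,r)^*=Q_{t,s}-Q_{t,r}\le Q_{t,s}$; and (ii) the identification of the RKHS of $TT^*$, for $T=S(t,r)\,i_{r,s}\in\Lin(H_{r,s},\bE)$, with the image $T(H_{r,s})$ equipped with the quotient norm. Given (i) and (ii) the conclusion follows exactly as you indicate, and your route has the conceptual bonus of exhibiting $S(t,r)H_{r,s}$ as precisely the RKHS of $Q_{t,s}-Q_{t,r}$. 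Two caveats. First, Proposition \ref{vn1Prop1.1} as stated in the paper gives only the set inclusion, not the norm bound, and fact (ii) is nowhere stated in the paper either; both are standard (the latter follows because $T$ restricted to the orthogonal complement of its kernel is an isometry onto $H_{TT^*}$), but in your write-up they would have to be proved or cited, whereas the paper's duality argument is self-contained given Section 2. Second, your statement of (ii) has a slip: the RKHS of $TT^*$ is exactly $T(H_{r,s})$, not its closure in $\bE$ (the quotient norm is only defined on the image); fortunately only the inclusion $S(t,r)H_{r,s}\subset H_{TT^*}$ together with the quotient-norm bound enters your argument, so this is harmless. Likewise, the well-definedness worry about non-injectivity of $S(t,r)i_{r,s}$ is not an obstacle: the map $h\mapsto S(t,r)h$ is defined through $S(t,r)\in\Lin(\bE)$, and non-injectivity only makes the quotient norm smaller, which strengthens the estimate.
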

\begin{proof}
For all $x^*\in\bE^*$ we have
\begin{align}
\abs{Q_{r,s}S(t,r)^*x^*}_{H_{r,s}}^2&=\seq{Q_{r,s}S(t,r)^*x^*,S(t,r)^*x^*}\notag\\
&=\seq{Q_{t,s}x^*,x^*}-\seq{Q_{t,r}x^*,x^*}\label{vn1-1.1}\\
&\le \seq{Q_{t,s}x^*,x^*}=\abs{Q_{t,s}x^*}_{H_{t,s}}^2.\notag
\end{align}
Hence,
\begin{align}
\abs{\seq{Q_{r,s}S(t,r)^*x^*,y^*}}&=\abs{[Q_{r,s}S(t,r)^*x^*,Q_{r,s}y^*]_{H_{r,s}}}\notag\\
&\le \abs{Q_{t,s}x^*}_{H_{t,s}}\abs{Q_{r,s}y^*}_{H_{r,s}}.\label{vn1-1.2}
\end{align}
For $y^*\in\bE^*$ fixed  we define the linear functional $\psi_{y^*}:\range Q_{t,s}\to\R$ by
\[
\psi_{y^*}(Q_{t,s}x^*):=\seq{Q_{r,s}S(t,r)^*x^*,y^*}.
\]
This is well-defined since, by (\ref{vn1-1.1}), if $Q_{t,s}x^*=0$ then $Q_{r,s}S(t,r)^*x^*=0.$ By (\ref{vn1-1.2}) $\psi_{y^*}$ extends to a bounded linear functional on $H_{t,s}$ with norm bounded by $\abs{Q_{r,s}y^*}_{H_{r,s}}.$ Identifying $\psi_{y^*}$ with an element of $H_{t,s},$ for all $x\in\bE^*$ we have
\[
\seq{\psi_{y^*},x^*}=[Q_{t,s}x^*,\psi_{y^*}]_{H_{t,s}}=\seq{Q_{r,s}S(t,r)^*x^*,y^*}=\seq{S(t,r)Q_{r,s}y^*,x^*}.
\]
Therefore, $S(t,r)Q_{r,s}y^*=\psi_{y^*}\in H_{t,s}$ and $\abs{S(t,r)Q_{r,s}y^*}_{H_{t,s}}\le \abs{Q_{r,s}y^*}_{H_{r,s}},$ and the desired result follows.
\end{proof}

Next we characterize the equality of the Hilbert spaces $H_{t,r}$ and $H_{t,s}$ in terms of the restriction $S(t,r)\in\Lin(H_{r,s},H_{t,s}).$
\begin{thm}\label{vn1thm1.7}
For all $0\le s<r<t$ we have $H_{t,s}=H_{t,r},$ as subsets of $\bE,$ if and only if $\norm{S(t,r)}_{\Lin(H_{r,s},H_{t,s})}<1.$
\end{thm}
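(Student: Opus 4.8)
The plan is to work with the single operator $B:=S(t,r)$, viewed by Theorem \ref{vn1thm1.4} as a bounded operator from $H_{r,s}$ into $H_{t,s}$ with $\norm{B}_{\Lin(H_{r,s},H_{t,s})}\le 1$, and to express its norm entirely in terms of the quadratic forms $x^*\mapsto\seq{Q_{t,s}x^*,x^*}$ and $x^*\mapsto\seq{Q_{t,r}x^*,x^*}$; the claimed equivalence then drops out of Propositions \ref{vn1Prop1.1} and \ref{vn1Prop1.3}. First I would compute the Hilbert-space adjoint $B^*\in\Lin(H_{t,s},H_{r,s})$. Since the inclusion maps satisfy $i_{t,s}B=S(t,r)i_{r,s}$ as operators $H_{r,s}\to\bE$ (both sides send $h$ to $S(t,r)h$), taking adjoints and using $i_{t,s}^*=Q_{t,s}$ and $i_{r,s}^*=Q_{r,s}$ yields
\[
B^*Q_{t,s}x^*=Q_{r,s}S(t,r)^*x^*,\qquad x^*\in\bE^*.
\]

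The key identity then follows from $(\ref{vn1-1.1})$ in the proof of Theorem \ref{vn1thm1.4} together with $\abs{Q_{t,s}x^*}_{H_{t,s}}^2=\seq{Q_{t,s}x^*,x^*}$ (the definition of the RKHS inner product): for every $x^*\in\bE^*$,
\[
\abs{B^*Q_{t,s}x^*}_{H_{r,s}}^2=\abs{Q_{r,s}S(t,r)^*x^*}_{H_{r,s}}^2=\abs{Q_{t,s}x^*}_{H_{t,s}}^2-\seq{Q_{t,r}x^*,x^*}.
\]
Because $\range Q_{t,s}$ is dense in $H_{t,s}$ and $B^*$ is bounded, this already re-proves $\norm{B^*}=\norm{B}\le 1$, and, more to the point, it shows that $\norm{B}<1$ holds if and only if there is $c\in[0,1)$ with $\abs{B^*Q_{t,s}x^*}_{H_{r,s}}^2\le c^2\abs{Q_{t,s}x^*}_{H_{t,s}}^2$ for all $x^*\in\bE^*$; substituting the key identity and using $\abs{Q_{t,s}x^*}_{H_{t,s}}^2=\seq{Q_{t,s}x^*,x^*}$, this is in turn equivalent to the existence of $c\in[0,1)$ such that
\[
(1-c^2)\seq{Q_{t,s}x^*,x^*}\le\seq{Q_{t,r}x^*,x^*}\qquad\text{for all }x^*\in\bE^*.
\]

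Finally, this last condition (for some $c\in[0,1)$) holds if and only if there is a constant $K>0$ with $\seq{Q_{t,s}x^*,x^*}\le K\seq{Q_{t,r}x^*,x^*}$ for all $x^*\in\bE^*$: take $K=(1-c^2)^{-1}$ in one direction, and in the other enlarge $K$ to be $\ge 1$ and set $c^2=1-K^{-1}$. By Proposition \ref{vn1Prop1.1} this is exactly the condition $H_{t,s}\subset H_{t,r}$, and since $H_{t,r}\subset H_{t,s}$ always holds by Proposition \ref{vn1Prop1.3}, we conclude that $\norm{S(t,r)}_{\Lin(H_{r,s},H_{t,s})}<1$ if and only if $H_{t,s}=H_{t,r}$.

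All the manipulations above are routine consequences of the covariance identity $Q_{t,s}=Q_{t,r}+S(t,r)Q_{r,s}S(t,r)^*$ and the two cited propositions; the only points requiring some care are the identification of $B^*$ (where one must not confuse the Hilbert-space adjoint with the Banach-space adjoint $S(t,r)^*$ acting on $\bE^*$), and the passage from the quadratic-form inequality on the dense subspace $\range Q_{t,s}$ to the operator-norm estimate on all of $H_{t,s}$, which relies on the a priori boundedness of $B^*$ from Theorem \ref{vn1thm1.4}.
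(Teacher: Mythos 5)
Your proposal is correct and follows essentially the same route as the paper: the heart of both arguments is the identity $\abs{Q_{r,s}S(t,r)^*x^*}_{H_{r,s}}^2=\seq{Q_{t,s}x^*,x^*}-\seq{Q_{t,r}x^*,x^*}$ coming from $Q_{t,s}=Q_{t,r}+S(t,r)Q_{r,s}S(t,r)^*$, combined with Proposition \ref{vn1Prop1.1} to convert the quadratic-form domination into the inclusion $H_{t,s}\subset H_{t,r}$. The only difference is organizational: by identifying the Hilbert-space adjoint $B^*Q_{t,s}x^*=Q_{r,s}S(t,r)^*x^*$ and using $\norm{B}=\norm{B^*}$ together with density of $\range Q_{t,s}$, you merge into a single chain of equivalences what the paper proves as two separate implications via a duality supremum and a Cauchy--Schwarz estimate.
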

\begin{proof}
We know already that $H_{t,r}\subset H_{t,s},$ so it remains to prove that $H_{t,s}\subset H_{t,r},$ if and only if $\norm{S(t,r)}_{\Lin(H_{r,s},H_{t,s})}<1.$

We assume first that $\norm{S(t,r)}_{\Lin(H_{r,s},H_{t,s})}<1.$ By Theorem \ref{vn1thm1.4}, for $y^*\in\bE^*$ we have $S(t,r)Q_{r,s}y^*\in H_{t,s}.$ Then, if $x^*\in\bE^*$ it follows that
\begin{equation}\label{QrsStrHrs}
\begin{split}
[Q_{r,s}S(t,r)^*x^*,Q_{r,s}y^*]_{H_{r,s}}&=\seq{S(t,r)^*x^*,Q_{r,s}y^*}\\
&=\seq{x^*,S(t,r)Q_{r,s}y^*}\\
&=[Q_{t,s}x^*,S(t,r)Q_{r,s}y^*]_{H_{t,s}}.
\end{split}
\end{equation}
Hence
\begin{align*}
&\abs{Q_{r,s}S(t,r)^*x^*}_{H_{r,s}}\\
&\phantom{mm}=\sup\set{[Q_{r,s}S(t,r)^*x^*,Q_{r,s}y^*]_{H_{r,s}}:y^*\in\bE^*, \ \abs{Q_{r,s}y^*}_{H_{r,s}}\le 1}\\
&\phantom{mm}=\sup\set{[Q_{t,s}x^*,S(t,r)Q_{r,s}y^*]_{H_{t,s}}:y^*\in\bE^*, \ \abs{Q_{r,s}y^*}_{H_{r,s}}\le 1}\\
&\phantom{mm}\le \norm{S(t,r)}_{\Lin(H_{r,s},H_{t,s})}\cdot \abs{Q_{t,s}x^*}_{H_{t,s}}.
\end{align*}
Using the last inequality, we get
\begin{align*}
  \abs{Q_{t,s}x^*}_{H_{t,s}}^2&=\abs{Q_{t,s}x^*}_{H_{t,s}}^2-\abs{Q_{r,s}S(t,r)^*x^*}^2_{H_{r,s}}+\abs{Q_{r,s}S(t,r)^*x^*}^2_{H_{r,s}}\\
  &= \seq{Q_{t,s}x^*,x^*}-\seq{S(t,r)Q_{r,s}S(t,r)^*x^*,x^*}+\abs{Q_{r,s}S(t,r)^*x^*}^2_{H_{r,s}}\\
  &\le \seq{Q_{t,r}x^*,x^*}+\norm{S(t,r)}_{\Lin(H_{r,s},H_{t,s})}^2\cdot \abs{Q_{t,s}x^*}_{H_{t,s}}^2
\end{align*}
That is,
\[
\seq{Q_{t,s}x^*,x^*}=\abs{Q_{t,s}x^*}_{H_{t,s}}^2\le\frac{1}{1-\norm{S(t,r)}_{\Lin(H_{r,s},H_{t,s})}^2}\seq{Q_{t,r}x^*,x^*}.
\]
By Proposition \ref{vn1Prop1.1}, this implies the inclusion $H_{t,s}\subset H_{t,r}.$ Conversely, assume that $H_{t,s}\subset H_{t,r}.$ Then there exists $K>0$ such that
\begin{align*}
\seq{Q_{t,s}x^*,x^*}&\le K\seq{Q_{t,r}x^*,x^*}\\
&=K\seq{Q_{t,s}x^*.x^*}-K\seq{S(t,r)Q_{r,s}S(t,r)^*x,x^*}
\end{align*}
for all $x^*\in\bE^*.$ Notice that $K>1$ since $\seq{Q_{t,r}x^*,x^*}\le \seq{Q_{t,s}x^*,x^*}$ for all $x\in\bE^*.$ Then, the above inequality yields
\[
\abs{Q_{r,s}S(t,r)^*x}_{H_{r,s}}^2\le\textstyle{\left(1-\frac{1}{K}\right)}\abs{Q_{t,s}x^*}_{H_{t,s}}^2.
\]
Using (\ref{QrsStrHrs}) again we get
\begin{align*}
\abs{[S(t,r)Q_{r,s}y^*,Q_{t,s}x^*]_{H_{t,s}}} &=\abs{[Q_{r,s}y^*,Q_{r,s}S(t,r)^*x^*]_{H_{r,s}}}\\
&\le \abs{Q_{r,s}y^*}_{H_{r,s}}\cdot\abs{Q_{r,s}S(t,r)^*x^*}_{H_{r,s}}\\
&\le \textstyle{\left(1-\frac{1}{K}\right)^{1/2}}\abs{Q_{r,s}y^*}_{H_{r,s}}\cdot\abs{Q_{t,s}x^*}_{H_{t,s}}
\end{align*}
which shows that $\norm{S(t,r)}_{\Lin(H_{r,s},H_{t,s})}\le \left(1-\frac{1}{K}\right)^{1/2}<1.$
\end{proof}
Finally, we establish the smoothing property of the Ornstein-Uhlenbeck transition operators. We need the following assumption, usually referred to as null-controllability condition (see Remark \ref{nullcontrol} below).
\begin{Assumption}\label{assu2}
For all $(t,s)\in\mathfrak{T}$ we have
\begin{equation}\label{eq6.2.3}
\range S(t,s)\subset H_{t,s}
\end{equation}
\end{Assumption}

\noindent\textbf{Notation.} If condition (\ref{eq6.2.3}) holds, we denote by $\Sigma(t,s)$ the map $S(t,s)$ regarded as an operator from $\bE$ into $H_{t,s}.$ Notice that $\Sigma(t,s)$ is bounded by the Closed-Graph Theorem, and we have $S(t,s)=i_{t,s}\circ \Sigma(t,s).$

As in Definition \ref{phimu}, let $\phi_{t,s}:H_{t,s}\to L^2(\bE,\mu_{t,s})$ denote the unique bounded extension of the isometry
\[
Q_{t,s}(\bE^*)\ni Q_{t,s}x^*\mapsto \seq{x^*,\cdot}\in L^2(\bE,\mu_{t,s}).
\]
Let $C_b^\infty(\bE)$ denote the set of infinitely Fr\'{e}chet-differentiable real-valued functions on $\bE.$ Using Proposition \ref{sp0} together with the condition (\ref{eq6.2.3}) we obtain the following
\begin{thm}\label{6.2.2}
Let Assumptions {\rm\textbf{(AT)}}, {\rm\textbf{\ref{assu1}}} and {\rm\textbf{\ref{assu2}}} be satisfied. Then the
Ornstein-Uhlenbeck transition operators $\set{P(s,t)}_{(t,s)\in\mathfrak{T}}$ satisfy
\[
\varphi\in\B_b(\bE)\Rightarrow P(s,t)\varphi\in \mathcal{C}_b^\infty(\bE).
\]
The Fr\'{e}chet derivative of the function
$P(s,t)\varphi:\bE\to\R$ at $x\in\bE$ in the direction $y\in\bE$ is
given by
\[
\seq{DP(s,t)\varphi(x),y}=\int_\bE \varphi(S(t,s)
x+z)\,\phi_{t,s}(\Sigma(t,s)y)(z)\,\mu_{t,s}(dz),
\]
and the second Fr\'{e}chet derivative of $P(s,t)\varphi$ at $x\in\bE$
in the directions $y_1,y_2\in\bE$ is given by
\begin{align*}
&\seq{D^2 P(s,t)\varphi(x)y_1,y_2}=-P(s,t)\varphi(x)\left[\Sigma(t,s)y_1,\Sigma(t,s)y_2\right]_{H_{t,s}}\\
&\phantom{mmmmmm}+\int_\bE \varphi(S(t,s)x+z)\,\phi_{t,s}(\Sigma(t,s)y_1)(z)\,\phi_{t,s}(\Sigma(t,s)y_2)(z)\,\mu_{t,s}(dz)
\end{align*}
In particular, we have the estimates
\begin{align}
\norm{D_x
P(s,t)\varphi(x)}_{\bE^*}&\le\norm{\Sigma(t,s)}_{\Lin(\bE,H_{t,s})}\abs{\varphi}_0\label{eq3a}\\
\norm{D^2_x P(s,t)\varphi(x)}_{\Lin(\bE,\bE^*)}&\le
2\norm{\Sigma(t,s)}^2_{\Lin(\bE,H_{t,s})}\abs{\varphi}_0\label{eq3b}.
\end{align}
\end{thm}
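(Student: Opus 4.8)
The plan is to deduce the statement from the smoothing property of Gaussian convolutions (Proposition \ref{sp0}), exploiting the factorization $[P(s,t)\varphi](x)=\psi_{t,s}(S(t,s)x)$ together with the null-controllability condition \textbf{\ref{assu2}}. Fix $(t,s)\in\mathfrak{T}$ and, in the notation of Section 2, set
\[
\psi_{t,s}(x):=\int_\bE\varphi(x+z)\,\mu_{t,s}(dz),\qquad x\in\bE ,
\]
which is well defined and bounded by $\abs{\varphi}_0$ since $\mu_{t,s}$ is a probability measure and $\varphi\in\B_b(\bE)$; by Definition \ref{ou}, $[P(s,t)\varphi](x)=\psi_{t,s}(S(t,s)x)$. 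By Assumption \textbf{\ref{assu2}} and the Closed-Graph Theorem, $\Sigma(t,s)\in\Lin(\bE,H_{t,s})$ and $S(t,s)=i_{t,s}\circ\Sigma(t,s)$, so that $S(t,s)$ carries every point of $\bE$ into $H_{t,s}$. Applying Proposition \ref{sp0} with $\mu=\mu_{t,s}$, $H_\mu=H_{t,s}$, $\phi_\mu=\phi_{t,s}$, the map $\psi_{t,s}$ is infinitely Fr\'{e}chet differentiable in the direction of $H_{t,s}$, with first and second derivatives given by (\ref{ffder})--(\ref{sfder}) and with $\norm{D_{H_{t,s}}\psi_{t,s}(w)}_{H_{t,s}^*}\le\abs{\varphi}_0$ and $\norm{D_{H_{t,s}}^2\psi_{t,s}(w)}_{\Lin(H_{t,s},H_{t,s}^*)}\le 2\abs{\varphi}_0$ for all $w\in\bE$.

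The next ingredient is an elementary chain rule: if a map $f$ defined on $\bE$ is Fr\'{e}chet differentiable at $w$ in the direction of $H_{t,s}$ and $\Sigma\in\Lin(\bE,H_{t,s})$, then $x\mapsto f(i_{t,s}\Sigma x)$ is Fr\'{e}chet differentiable at $w$ with derivative $h\mapsto\bigl(D_{H_{t,s}}f(i_{t,s}\Sigma w)\bigr)(\Sigma h)$; indeed, the remainder is by definition $o(\abs{\Sigma h}_{H_{t,s}})$, and $\abs{\Sigma h}_{H_{t,s}}\le\norm{\Sigma}_{\Lin(\bE,H_{t,s})}\abs{h}_\bE$ forces it to be $o(\abs{h}_\bE)$ as $h\to 0$ in $\bE$. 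Applying this with $f=\psi_{t,s}$, the function $P(s,t)\varphi$ is Fr\'{e}chet differentiable and
\[
\seq{D_xP(s,t)\varphi(x),y}=\bigl(D_{H_{t,s}}\psi_{t,s}(S(t,s)x)\bigr)(\Sigma(t,s)y),
\]
so substituting (\ref{ffder}) yields the stated first-derivative formula. Applying the same reasoning to the $H_{t,s}^*$-valued map $x\mapsto D_{H_{t,s}}\psi_{t,s}(S(t,s)x)$, which is Fr\'{e}chet differentiable in the direction of $H_{t,s}$ by Proposition \ref{sp0}, and then composing with the bounded operator $\Sigma(t,s)^*$, one obtains $D_x^2P(s,t)\varphi(x)=\Sigma(t,s)^*\,D_{H_{t,s}}^2\psi_{t,s}(S(t,s)x)\,\Sigma(t,s)$; substituting (\ref{sfder}) and using $\psi_{t,s}(S(t,s)x)=[P(s,t)\varphi](x)$ gives the second-derivative formula. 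Iterating, the $n$-th Fr\'{e}chet derivative of $P(s,t)\varphi$ at $x$ is the $n$-linear form obtained by inserting $\Sigma(t,s)$ in each argument of the $n$-th $H_{t,s}$-directional derivative of $\psi_{t,s}$ at $S(t,s)x$; since the latter exists and depends continuously on the base point (this is exactly what the argument in the proof of Proposition \ref{sp0} gives, via the $L^2(\bE,\mu_{t,s})$-representations and dominated convergence), $P(s,t)\varphi$ is bounded and infinitely Fr\'{e}chet differentiable with continuous derivatives, i.e. $P(s,t)\varphi\in\mathcal{C}_b^\infty(\bE)$.

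It remains to record the estimates \rf{eq3a}--\rf{eq3b}, which follow by taking operator norms in the formulas just derived:
\[
\norm{D_xP(s,t)\varphi(x)}_{\bE^*}\le\norm{D_{H_{t,s}}\psi_{t,s}(S(t,s)x)}_{H_{t,s}^*}\,\norm{\Sigma(t,s)}_{\Lin(\bE,H_{t,s})}\le\abs{\varphi}_0\,\norm{\Sigma(t,s)}_{\Lin(\bE,H_{t,s})},
\]
and analogously \rf{eq3b} from the bound $2\abs{\varphi}_0$ on $\norm{D_{H_{t,s}}^2\psi_{t,s}}_{\Lin(H_{t,s},H_{t,s}^*)}$ together with the two factors of $\Sigma(t,s)$. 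The only delicate point is the chain-rule passage: Proposition \ref{sp0} supplies differentiability only along $H_{t,s}$-directions, not full Fr\'{e}chet differentiability on $\bE$, and the null-controllability condition \rf{eq6.2.3} is precisely what repairs this, converting an arbitrary $\bE$-perturbation $h$ into the $H_{t,s}$-perturbation $\Sigma(t,s)h$ with the quantitative control supplied by the Closed-Graph Theorem. Beyond carefully threading this observation through the inductive step for the higher-order derivatives, I expect no further difficulty.
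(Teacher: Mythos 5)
Your proposal is correct and follows exactly the route the paper intends: the paper states Theorem \ref{6.2.2} as a direct consequence of Proposition \ref{sp0} applied to $\psi_{t,s}$ with $\mu=\mu_{t,s}$, combined with the factorization $S(t,s)=i_{t,s}\circ\Sigma(t,s)$ supplied by Assumption \ref{assu2} and the Closed-Graph Theorem, which is precisely your chain-rule argument. Your explicit treatment of the chain rule and of the higher-order/continuity details only spells out what the paper leaves implicit.
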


\begin{rem}\label{nullcontrol}
The condition (\ref{eq6.2.3}) has a well-known control theoretic
interpretation: for each $s\in [0,T]$ consider the nonhomogeneous Cauchy problem
\begin{equation}\label{ccp}
\begin{split}
    y'(t)+A(t)y(t)&=G(t)u(t), \ \ \ \ t\in[s,T],\\
    y(s)&=x\in\bE,
\end{split}
\end{equation}
with $u\in L^2(s,T;\bH).$ The \emph{mild solution} of (\ref{ccp}) is defined as
\begin{equation}
y^{x,u}(t):=S(t,s) x+\int_s^t S(t,r)G(r)u(r)\,dr, \ \ \ t\in[s,T].
\end{equation}
We say that (\ref{ccp}) is \textit{null-controllable} in time $t$ iff for all $x\in\bE$ there exists a \textit{control} $u\in L^2(s,t
;\bH)$ such that $y^{x,u}(t)=0.$ Using the following characterization of the Hilbert spaces $H_{t,s},$
\begin{equation}\label{crkhst}
H_{t,s}=\left\{\int_t^s S(t,r)G(r)u(r)\,dr:u\in L^2(s,t;\bH)\right\}, \ \ \
(t,s)\in\mathfrak{T}
\end{equation}
(see e.g. van Neerven \cite[Lemma 5.2]{vn2}) it follows that (\ref{ccp}) is null-controllable in time $t$ if and
only if condition (\ref{eq6.2.3}) holds, and we have
\begin{equation}\label{normht}
\abs{x}_{H_{t,s}}=\inf\set{\abs{u}_{L^2(s,t;\bH)}:u\in L^2(s,t;\bH) \ \text{ and } \ \int_s^t S(t,r)G(r)u(r)\,dr=x}.
\end{equation}
That is, $\abs{x}_{H_{t,s}}^2$ is the minimal energy needed to steer the control system (\ref{ccp}) from $0$ to $x$ in time $t-s.$
\end{rem}

\begin{ex}\label{ex3}
Suppose that for each $t\in[0,T]$ the map $G(t)$ is injective and for each $(t,s)\in\mathfrak{T}$ we have
\[
\range S(t,s)\subset \range G(t).
\]
Suppose also that for each $s\in [0,T]$ we have
\[
\int_s^T|\!|G(t)^{-1}S(t,s)|\!|^2_{\Lin(\bE,\bH)}\,dt<+\infty.
\]
Then Assumption \ref{assu2} holds. Indeed, let $x\in\bE$ and $0\le s<t\le T,$ and define
\[
u(r):=\frac{1}{t-s}G(r)^{-1}S(r,s)x, \ \ \ r\in [s,t].
\]
Then $u\in L^2(s,t;\bH)$ and we have
\[
\int_s^t S(t,r)G(r)u(r)\,dr=\frac{1}{t-s}\int_s^t S(t,r)S(r,s)x\,dr=S(t,s)x
\]
that is, $S(t,s)x\in H_{t,s}$ according to (\ref{crkhst}), and Assumption \ref{assu2} follows. Moreover, by (\ref{normht}), we have
\[
\abs{\Sigma(t,s)x}_{H_{t,s}}\le \frac{1}{t-s}\abs{x}_{\bE}\left(\int_s^t|\!|G(r)^{-1}S(r,s)|\!|^2_{\Lin(\bE,\bH)}\,dr\right)^{1/2}, \ \ \ s<t\le T.
\]
\end{ex}

\section{Mild solutions of Hamilton-Jacobi equations in Banach spaces}
Let $\set{-A(t)}_{t\in [0,T]}$ be the generator of an evolution family on a Banach space $\bE.$ Let $\bH$ be a separable Hilbert space and let $\set{G(t)}_{t\in [0,T]}$ be a family of (possibly unbounded) linear operators from $\bH$ into $\bE.$ We consider the Hamilton-Jacobi equation on $\bE$
\begin{equation}\label{hjb1}
\begin{split}
\frac{\partial v}{\partial t}(t,x)+L_tv(t,\cdot)(x)+\cH(t,x,D_xv(t,x))&=0, \ \ \ (t,x)\in [0,T]\times\bE,\\
v(T,x)&=\varphi(x).
\end{split}
\end{equation}
The final condition $\varphi:\bE\to\R$ and the nonlinear \emph{Hamiltonian} operator $\cH:[0,T]\times\bE\times\bE^*\to\R$ are given, and for each $t\in[0,T],$ $L_t$ is the second-order differential operator
\[
(L_t\phi)(x):=-\seq{A(t)x,D_x \phi(x)}+\frac{1}{2}\Tr_\bH[G(t)^*D_x^2\phi(x)G(t)],
\]
for $x\in D(A(t)), \ \phi\in \mathcal{C}_b^2(\bE).$ Using the associated OU-transition evolution operators $\{P(s,t)\}_{(t,s)\in\mathfrak{T}}$ (see Definition \ref{ou}) we rewrite equation (\ref{hjb1}) in the integral form
\begin{equation}\label{hjb2}
v(t,x)=[P(t,T)\varphi](x)+\int_t^T \left[P(t,s)\cH(s,\cdot,D_x v(s,\cdot))\right](x)\,ds.
\end{equation}
Observe that the trace term in (\ref{hjb1}) may not be well-defined since $G(t)$ is not necessarily a bounded operator.
\begin{defn}
For $\alpha\in (0,1),$ we denote with $\mathfrak{S}_{T,\alpha}$ the set of bounded and measurable functions $v:[0,T]\times\bE\to\R$ such that
$v(t,\cdot)\in \mathcal{C}_b^1(\bE),$ for all $t\in [0,T),$ and the mapping
\[
[0,T)\times\bE\ni(t,x)\mapsto (T-t)^\alpha D_x v(t,x)\in\bE^*
\]
is bounded and measurable.
\end{defn}
The space $\mathfrak{S}_{T,\alpha}$ is a Banach space endowed with the norm
\[
\norm{v}_{\mathfrak{S}_{T,\alpha}}:=\sup_{t\in [0,T]}\norm{v(t,\cdot)}_{0}+\sup_{t\in [0,T]} \ (T-t)^\alpha \norm{D_xv(t,\cdot)}_{0}.
\]
\begin{defn}
We will say that a function $v:[0,T]\times\bE\to\R$ is a \emph{mild} solution of the Hamilton-Jacobi equation (\ref{hjb1}) if $v\in\mathfrak{S}_{T,\alpha}$ for some $\alpha\in (0,1),$ for each $(t,x)\in [0,T]\times\bE$ the mapping
\[
[t,T]\ni s\mapsto [P(t,s)\cH(s,\cdot,D_x v(s,\cdot))](x)\in\R
\]
is integrable and $v$ satisfies (\ref{hjb2}).
\end{defn}

\begin{Assumption}\label{assu3}
There exists $\alpha\in(0,1)$ and $C>0$ such that
\[
|\!|\Sigma(t,s)|\!|_{\Lin(\bE,H_{t,s})}\le C{(t-s)^{-\alpha}}, \ \ \ \ 0\le s <t\le T.
\]
\end{Assumption}
\begin{ex}\label{ex4}
Under the same assumptions of Example \ref{ex3}, assume further that there exists $\beta\in[0,\frac{1}{2})$ and $C>0$ such that
\[
|\!|G(t)^{-1}S(t,s)|\!|_{\Lin(\bE,\bH)}\le C(t-s)^{-\beta}, \ \ \ 0\le s<t\le T.
\]
Then Assumption \ref{assu3} holds with $\alpha=\beta+\frac{1}{2}.$
\end{ex}
\begin{Assumption}\label{assu4}
For all $(t,p)\in [0,T]\times\bE^*,$ the map
\[
\bE\ni x\mapsto\cH(t,x,p)\in\R
\]
is continuous and bounded, and there exists $C>0$ such that
\[
\abs{\cH(t,x,p)-\cH(t,x,q)}\le C\abs{p-q}_{\bE^*}, \ \ \ \ t\in [0,T], \ \ x\in\bE, \ \ p,q\in\bE^*.
\]
\end{Assumption}

\begin{ex}
If the Hamiltonian $\cH$ has the form
\begin{equation}\label{ham0}
\cH(t,x,p)=\inf_{u\in M}\set{\seq{F(t,x,u),p}+h(t,x,u)}, \ (t,x,p)\in [0,T]\times\bE\times\bE^*
\end{equation}
where $M$ is a separable metric space, $F:[0,T]\times\bE\times M\to\bE$ is uniformly bounded and weakly-continuous in $x\in\bE$ uniformly with respect to $u\in M,$ and $h:[0,T]\times \bE\times M\to (-\infty,\infty]$ is continuous in $x\in\bE$ uniformly with respect to $u\in M$ and satisfies
\[
\sup_{x\in\bE,u\in M}\abs{h(t,x,u)}<+\infty, \ \forall t\in [0,T]
\]
then Assumption \textbf{\ref{assu4}} holds, see e.g. the proof of Theorem 10.1 in Fleming and Soner \cite[Chapter II]{fs1} or Bardi and Capuzzo-Dolcetta \cite[Chapter III, Lemma 2.11]{bardi}.

Recall that if Hamiltonian $\cH$ takes the form (\ref{ham0}), equation (\ref{hjb1}) is the \emph{Hamilton-Jacobi-Bellman} PDE associated with the dynamic programming approach to stochastic optimal control problems of the form
\begin{equation}
\text{minimize}  \ \ \ J(X,u)=\Exp\left[\int_0^T h(t,X(t),u(t))\,dt+\varphi(X(T))\right]
\end{equation}
subject to
\begin{itemize}
  \item $\{u(t)\}_{t\in [0,T]}$ is an $M$-valued control process
  \item $\set{X(t)}_{t\in [0,T]}$ is the $\bE$-valued solution to the controlled non-autonomous stochastic evolution equation with additive noise
\begin{equation*}
\begin{split}
dX(t)+A(t)X(t)\,dt&=F(t,X(t),u(t))\,dt+G(t)\,dW(t),\\
X(0)&=x_0\in\bE.
\end{split}
\end{equation*}
\end{itemize}
\end{ex}

\begin{rem}
A Banach-space framework seems more suitable for certain control problems for stochastic PDEs. Consider for instance the following controlled stochastic PDE of \emph{reaction-diffusion} type perturbed by additive space-time white noise on
$[0,T]\times (0,1),$
\begin{equation}\label{exspde}
\begin{split}
\frac{\partial X}{\partial t}(t,\xi)+(\mathcal{A}_t X)(t,\xi)&=f(X(t,\xi),u(t))+g(t,\xi)\,\frac{\partial w}{\partial t}(t,\xi),\\
X(t,0)=X(t,1)&=0,\\
X(0,\cdot)&=x_0(\cdot),
\end{split}
\end{equation}
where, for each $t\in [0,T],$ $\mathcal{A}_t$ denotes the second order
differential operator introduced in Example \ref{ex2}. Stochastic PDEs of the form (\ref{exspde}) feature when modeling the concentration, density or temperature of a certain substance under random perturbations. In these applications, it is useful to study running cost functionals that permit to regulate $X(\cdot)$ at some fixed points
$\zeta_1,\ldots,\zeta_n\in(0,1),$ say
\begin{equation}\label{excostfunct}
J(X,u)=\mathds{E}\left[\int_0^T \phi(t,X(t,\zeta_1),\ldots,X(t,\zeta_n),u(t))\,dt\right].
\end{equation}
This running costs clearly requires that the solution $X(t,\xi)$ is continuous with respect to the space variable $\xi.$ Recently, Veraar \cite{veraar1} (see also Veraar and Zimmerschied \cite{veraarzimmer1}) have proved that weak solutions to the uncontrolled version of equation (\ref{exspde}) exist and have trajectories almost surely in
\[
\mathcal{C}([0,T];D(A_p(0)^\delta)) \ \ \ \text{for \ } \delta<\frac{1}{4}
\]
where $A_p(t)$ denotes the realization in $L^p(0,1)$ of $\A_t$ with zero-Dirichlet boundary conditions, see Example \ref{ex2}. If we choose
\[
p>2 \ \ \text{ and  } \ \ \delta\in\left(\frac{1}{2p},\frac{1}{4}\right)
\]
using Theorem 1.15.3 in Triebel \cite{triebel} and Sobolev's embedding theorem, it follows
\[
D(A_p(0)^\delta)=[L^p(0,1),D(A_p(0))]_\delta=H^{2\delta,p}_0(0,1)\hookrightarrow\mathcal{C}_0[0,1]
\]
that is, cost functional (\ref{excostfunct}) is now well-defined. This suggests to choose $\bE=L^p(0,1)$ with $p>2$ as state space for the above control problem and the associated Hamilton-Jacobi-Bellman equation (\ref{hjb1}).

As we mentioned in the introduction, at the moment we are unable to obtain optimality criteria and verification-type results for optimal control problems in Banach spaces for non-autonomous stochastic PDEs as this requires approximation results of in $\mathcal{C}_b(\bE)$ by smooth functions that do not seem available at the moment in the general Banach-space setting. We will address this issue in a forthcoming paper.
\end{rem}
We now present the final result of this paper, which generalizes to the non-autonomous Banach-space setting Theorem 9.3 in Zabcyck \cite{zabc1} on existence of mild solutions to HJ equations in Hilbert spaces (see also Da Prato and Zabcyck \cite[Part III]{dpz3} and Masiero \cite{masiero1}).
\begin{thm}\label{main}
Let $\varphi\in\mathcal{C}_b(\bE).$ Suppose Assumptions {\rm\textbf{(AT)}} and {\rm\textbf{\ref{assu1}}}-{\rm\textbf{\ref{assu4}}} hold true. Then there exists a unique mild solution to equation $(\ref{hjb1}).$
\end{thm}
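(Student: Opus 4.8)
The plan is to solve the integral equation \eqref{hjb2} by a fixed-point argument in the Banach space $\mathfrak{S}_{T,\alpha}$, with $\alpha\in(0,1)$ the exponent furnished by Assumption \textbf{\ref{assu3}}. Define the map $\Gamma$ on $\mathfrak{S}_{T,\alpha}$ by
\[
(\Gamma v)(t,x):=[P(t,T)\varphi](x)+\int_t^T\bigl[P(t,s)\,\cH(s,\cdot,D_xv(s,\cdot))\bigr](x)\,ds,
\]
and the goal is to show $\Gamma$ maps $\mathfrak{S}_{T,\alpha}$ into itself and is a contraction, at least after restricting to a short time interval $[T-\delta,T]$ (or, equivalently, after introducing an exponentially weighted equivalent norm on $\mathfrak{S}_{T,\alpha}$ so that the contraction is global in time). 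First I would check that $\Gamma v$ is well-defined: by Theorem \ref{6.2.2}, for each fixed $s$ the function $P(t,s)\psi$ is in $\mathcal{C}_b^\infty(\bE)$ whenever $\psi\in\B_b(\bE)$, and Assumption \textbf{\ref{assu4}} guarantees that $x\mapsto\cH(s,x,D_xv(s,x))$ is bounded and Borel-measurable; the estimates \eqref{eq3a} together with Assumption \textbf{\ref{assu3}} give $\norm{D_x[P(t,s)\cH(s,\cdot,D_xv(s,\cdot))]}_0\le C(s-t)^{-\alpha}|\cH(s,\cdot,\cdot)|_0$, whose singularity is integrable in $s$ since $\alpha<1$, so the integral term lies in $\mathcal{C}_b^1(\bE)$ and the required weighted-derivative bound holds.

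Next I would carry out the two quantitative estimates. For the contraction, take $v_1,v_2\in\mathfrak{S}_{T,\alpha}$. The difference $(\Gamma v_1-\Gamma v_2)(t,x)$ equals $\int_t^T[P(t,s)(\cH(s,\cdot,D_xv_1)-\cH(s,\cdot,D_xv_2))](x)\,ds$; the Lipschitz bound in Assumption \textbf{\ref{assu4}} gives
\[
\abs{\cH(s,x,D_xv_1(s,x))-\cH(s,x,D_xv_2(s,x))}\le C\,\abs{D_xv_1(s,x)-D_xv_2(s,x)}_{\bE^*}\le C(T-s)^{-\alpha}\norm{v_1-v_2}_{\mathfrak{S}_{T,\alpha}}.
\]
Plugging this into the sup-norm estimate for $P(t,s)$ (which is a contraction on $\B_b(\bE)$) controls the first part of the $\mathfrak{S}_{T,\alpha}$-norm, and plugging it into \eqref{eq3a}+Assumption \textbf{\ref{assu3}} controls the weighted-derivative part; the resulting bound involves a Beta-type integral $\int_t^T(s-t)^{-\alpha}(T-s)^{-\alpha}\,ds$, which is finite (again because $\alpha<1$) and, crucially, can be made $<1$ either by shrinking $T-t$ or by absorbing it into an exponential weight $e^{-\lambda(T-t)}$ with $\lambda$ large. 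A Banach fixed-point argument then gives a unique fixed point in $\mathfrak{S}_{T,\alpha}$; on short intervals one iterates backward from $T$, gluing the solutions, which is routine because the bound on each subinterval depends only on its length.

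The main obstacle, and the step deserving the most care, is verifying that the nonlinear substitution operator $v\mapsto\bigl((s,x)\mapsto\cH(s,x,D_xv(s,x))\bigr)$ sends $\mathfrak{S}_{T,\alpha}$ into the class of functions to which $P(t,s)$ and Theorem \ref{6.2.2} apply, i.e. bounded Borel-measurable functions on $\bE$, and that the resulting map $(t,x)\mapsto\int_t^T[\cdots]\,ds$ is jointly measurable and has the claimed Fréchet-differentiability in $x$ with the weighted bound. Measurability in $(s,x)$ of $\cH(s,x,D_xv(s,x))$ follows from continuity of $\cH$ in $x$ (Assumption \textbf{\ref{assu4}}) and measurability of $(s,x)\mapsto D_xv(s,x)$ built into the definition of $\mathfrak{S}_{T,\alpha}$; interchanging $D_x$ with $\int_t^T\!ds$ is justified by dominated convergence using the integrable majorant $C(s-t)^{-\alpha}$ from \eqref{eq3a}. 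Once these measurability and differentiation-under-the-integral points are nailed down, the rest is the standard contraction-mapping scheme, and the uniqueness of the mild solution is immediate from uniqueness of the fixed point together with the observation that any mild solution in $\mathfrak{S}_{T,\beta}$ for some $\beta\in(0,1)$ is automatically in $\mathfrak{S}_{T,\alpha}$ for the $\alpha$ of Assumption \textbf{\ref{assu3}} (shrinking $\beta$ only enlarges the space, and the a priori estimates force the correct decay rate).
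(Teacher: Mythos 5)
Your proposal matches the paper's argument: the paper defines exactly this map $\gamma(v)$ on $\mathfrak{S}_{T,\alpha}$, checks it maps the space into itself via Theorem \ref{6.2.2}, estimate (\ref{eq3a}) and Assumptions \ref{assu3}--\ref{assu4}, and proves it is a strict contraction for the exponentially weighted equivalent norm $\norm{\cdot}_{\beta,\mathfrak{S}_{T,\alpha}}$ (handling the $(s-t)^{-\alpha}(T-s)^{-\alpha}$ Beta-type integrals by a splitting argument and taking $\beta$ large), then applies the Banach fixed point theorem. Since you propose the same scheme, with the short-interval-plus-gluing variant only as an equivalent alternative to the weighted norm actually used, the approach is essentially identical and correct.
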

\begin{proof}
The argument is largely based on the proof of Theorem 2.9 in Masiero \cite{masiero1}. For any $v\in\mathfrak{S}_{T,\alpha}$ we define the function $\gamma(v)$ by
\[
\gamma(v)(t,x):=[P(t,T)\varphi](x)+\int_t^T \left[P(t,s)\cH(s,\cdot,D_x v(s,\cdot))\right](x)\,ds,
\]
for $(t,x)\in [0,T]\times\bE.$ By Theorem \ref{6.2.2}, estimate (\ref{eq3a}) and Assumptions \ref{assu3}-\ref{assu4}, the map $\gamma(v)$ belongs to $\mathfrak{S}_{T,\alpha}.$ We will show that $\gamma$ is a strict contraction on $\mathfrak{S}_{T,\alpha}$ when endowed with the equivalent norm
\[
\norm{v}_{\beta,\mathfrak{S}_{T,\alpha}}:=\sup_{t\in [0,T]}\exp(-\beta(T-t))\left[\norm{v(t,\cdot)}_{0}+(T-t)^\alpha \norm{D_xv(t,\cdot)}_{0}\right]
\]
and $\beta>0$ is a parameter to be specified below. Let $v_1,v_2\in\mathfrak{S}_{T,\alpha}.$

\noindent\textsc{Step 1.} From Assumptions \ref{assu3}-\ref{assu4} and estimate (\ref{eq3a}), we obtain
\begin{align*}
&\abs{\gamma(v_1)(t,x)-\gamma(v_2)(t,x)}\\
&\phantom{mm}\le\int_t^T\bigl|\left[P(t,s)\left(\cH(s,\cdot,D_x v_1(s,\cdot))-\cH(s,\cdot,D_x v_2(s,\cdot))\right)\right](x)\bigr|\,ds\,\\
&\phantom{mm}\le C\int_t^T\abs{D_xv_1(s,x)-D_xv_2(s,x)}_{\bE^*}\,ds\\
&\phantom{mm}\le C\int_t^T(T-s)^{-\alpha}\exp(\beta(T-s))\norm{v_1-v_2}_{\beta,\mathfrak{S}_{T,\alpha}}\,ds.
\end{align*}
Let $\varepsilon\in (0,1).$ We can estimate the above integral as follows
\begin{align*}
\int_t^T&(T-s)^{-\alpha}\exp(\beta(T-s))\,ds\\
&=(T-t)^{1-\alpha}\int_0^1 r^{-\alpha}\exp(\beta(T-t)r)\,dr\\
&=(T-t)^{1-\alpha}\left[\int_0^\varepsilon r^{-\alpha}\exp(\beta(T-t)r)\,dr+\int_\varepsilon^1 r^{-\alpha}\exp(\beta(T-t)r)\,dr\right]\\
&\le \frac{(T-t)^{1-\alpha}}{1-\alpha}\left[\varepsilon^{1-\alpha}\exp(\beta(T-t)\varepsilon)
+(1-\varepsilon^{1-\alpha})\exp(\beta(T-t))\right]
\end{align*}
Then
\begin{align*}
&\exp(-\beta(T-t))\norm{\gamma(v_1)(t,\cdot)-\gamma(v_2)(t,\cdot)}_0\\
&\phantom{A}\le \frac{C(T-t)^{1-\alpha}}{1-\alpha}\left[\varepsilon^{1-\alpha}\exp(-\beta(T-t)(1-\varepsilon))
+(1-\varepsilon^{1-\alpha})\right]\norm{v_1-v_2}_{\beta,\mathfrak{S}_{T,\alpha}}.
\end{align*}
We may choose $\varepsilon_1\in (0,1)$ such that
\[
\frac{CT^{1-\alpha}}{1-\alpha}(1-\varepsilon_1^{1-\alpha})<\frac{1}{5}.
\]
Now, for $\beta>\frac{1-\alpha}{T(1-\varepsilon_1)}$ the map
\[
[0,T]\ni t\mapsto [\varepsilon_1(T-t)]^{1-\alpha}\exp(-\beta(T-t)(1-\varepsilon_1))
\]
attains its global maximum at $\bar{t}=T-\frac{1-\alpha}{\beta(1-\varepsilon_1)}.$ Hence,
\[
\sup_{t\in[0,T]}[\varepsilon_1(T-t)]^{1-\alpha}\exp(-\beta(T-t)(1-\varepsilon_1))\\
 =\left[\frac{\varepsilon_1(1-\alpha)}{e\beta (1-\varepsilon_1)}\right]^{1-\alpha}
\]
which tends to zero as $\beta\to\infty.$ Therefore, we can choose $\beta_1=\beta_1(\varepsilon_1)$ sufficiently large such that
\[
\sup_{t\in[0,T]}\frac{C[\varepsilon_1(T-t)]^{1-\alpha}}{1-\alpha}\exp(-\beta_1(T-t)(1-\varepsilon_1))<\frac{1}{5}.
\]
Thus, if $\varepsilon\in(\varepsilon_1,1)$ and $\beta>\beta_1(\varepsilon),$ we have
\[
\sup_{t\in[0,T]}\exp(-\beta(T-t))\norm{\gamma(v_1)(t,\cdot)-\gamma(v_2)(t,\cdot)}_0
\le \frac{2}{5}\norm{v_1-v_2}_{\beta,\mathfrak{S}_{T,\alpha}}.
\]
\noindent\textsc{Step 2.} Using again Assumption \ref{assu3} and estimate (\ref{eq3a}), it follows that
\begin{align*}
   &\abs{D_x\gamma(v_1)(t,x)-D_x\gamma(v_2)(t,x)}_{\bE^*}\\
   &\phantom{P(s,t)}\le \int_t^T\abs{D_xP(t,s)\left[\cH(s,\cdot,D_x v_1(s,\cdot))-\cH(s,\cdot,D_x v_2(s,\cdot))\right](x)}_{\bE^*}\,ds\\
   &\phantom{P(s,t)}\le C\int_t^T(s-t)^{-\alpha}\abs{\cH(s,\cdot,D_x v_1(s,\cdot))-\cH(s,\cdot,D_x v_2(s,\cdot))}\,ds\\
   &\phantom{P(s,t)}\le C^2\int_t^T(s-t)^{-\alpha}\abs{D_xv_1(s,x)-D_xv_2(s,x)}_{\bE^*}\,ds\\
   &\phantom{P(s,t)}\le C^2\int_t^T(s-t)^{-\alpha}(T-s)^{-\alpha}\exp(\beta(T-s))\norm{v_1-v_2}_{\beta,\mathfrak{S}_{T,\alpha}}ds.
\end{align*}
For the last integral we have
\begin{align*}
\int_t^T&(s-t)^{-\alpha}(T-s)^{-\alpha}\exp(\beta(T-s))\,ds\\
&=(T-t)^{1-2\alpha}\left[\int_0^\varepsilon (1-r)^{-\alpha}r^{-\alpha}\exp(\beta(T-t)r)\,dr\right.\\
&\phantom{T-t}\left.
+\int_\varepsilon^1 (1-r)^{-\alpha}r^{-\alpha}\exp(\beta(T-t)r)\,dr\right]\\
&\le \frac{(T-t)^{1-2\alpha}}{1-\alpha}\left[(1-\varepsilon)^{-\alpha}\varepsilon^{1-\alpha}\exp(\beta(T-t)\varepsilon)\right.\\
&\phantom{T-t}
+\left.(1-\varepsilon)^{1-\alpha}\varepsilon^{-\alpha}\exp(\beta(T-t))\right].
\end{align*}
Hence,
\begin{align*}
&\exp(-\beta(T-t))(T-t)^\alpha \norm{D_x\gamma(v_1)(t,\cdot)-D_x\gamma(v_2)(t,\cdot)}_0\\
&\phantom{A}\le \frac{C^2(T-t)^{1-\alpha}}{1-\alpha}\left[(1-\varepsilon)^{-\alpha}\varepsilon^{1-\alpha}\exp(\beta(T-t)(\varepsilon-1)\right.\\
&\phantom{AAA}\left.+(1-\varepsilon)^{1-\alpha}\varepsilon^{-\alpha}\right]\norm{v_1-v_2}_{\beta,\mathfrak{S}_{T,\alpha}}.
\end{align*}
As in step 1, we may choose $\varepsilon_2\in(0,1)$ such that
\[
\frac{C_2\varepsilon_2^{-\alpha}[T(1-\varepsilon_2)]^{1-\alpha}}{1-\alpha}<\frac{1}{5}
\]
and $\beta_2=\beta_2(\varepsilon_2)>0$ sufficiently large such that
\[
\sup_{t\in [0,T]}
\frac{C^2[\varepsilon_2(T-t)]^{1-\alpha}(1-\varepsilon_2)^{-\alpha}}{1-\alpha}\exp(\beta_2(T-t)(\varepsilon_2-1)<\frac{1}{5}.
\]
Thus, for $\varepsilon\in(\varepsilon_2,1)$ and $\beta>\beta_2(\varepsilon),$
\begin{align*}
\sup_{t\in[0,T]}&\exp(-\beta(T-t))(T-t)^\alpha \norm{D_x\gamma(v_1)(t,\cdot)-D_x\gamma(v_2)(t,\cdot)}_0\\
&\le \frac{2}{5}\norm{v_1-v_2}_{\beta,\mathfrak{S}_{T,\alpha}}.
\end{align*}
We conclude that for $\varepsilon\in(\max\set{\varepsilon_1,\varepsilon_2},1)$ and $\beta>\max\set{\beta_1(\varepsilon),\beta_2(\varepsilon)}$ we have
\[
\norm{\gamma(v_1)-\gamma(v_2)}_{\beta,\mathfrak{S}_{T,\alpha}}\le \frac{4}{5}\norm{\gamma(v_1)-\gamma(v_2)}_{\beta,\mathfrak{S}_{T,\alpha}}
\]
and the desired result follows from the Banach fixed point Theorem.
\end{proof}

\begin{ex}\label{exHJ:Lp}
Let $p\geq 2$ be fixed and let $A_p(t)$ and $G(t)$ be as in Example \ref{ex2}, with $g$ satisfying $k_1<\abs{g(t,\xi)}<k_2$ for all $(t,\xi)\in [0,T]\times (0,1),$ for some $k_1>0.$ Let $p^*:=p/(p-1)$ and let $\varphi:L^p(0,1)\to\R$ and $\cH:[0,T]\times L^p(0,1)\times L^{p^*}(0,1)\to\R$ satisfy Assumption \textbf{\ref{assu4}}. Consider the following non-autonomous HJ equation on $[0,T]\times L^p(0,1),$
\begin{equation}\label{eqHJ:Lp}
\begin{split}
\frac{\partial v}{\partial t}(t,x)+L_tv(t,\cdot)(x)+\cH(t,x,D_xv(t,x))&=0, \ \ \ (t,x)\in [0,T]\times L^p(0,1),\\
v(T,x)&=\varphi(x),
\end{split}
\end{equation}
where, for each $t\in[0,T],$ $L_t$ denotes the second-order differential operator on $L^p(0,1),$
\[
(L_t\phi)(x):=-\seq{A_p(t)x,D_x \phi(x)}+\frac{1}{2}\Tr_{L^2(0,1)}[G(t)^*D_x^2\phi(x)G(t)],
\]
$x\in D(A_p(t)), \ \phi\in \mathcal{C}_b^2(L^p(0,1)).$ Here $\seq{\cdot,\cdot}$ denotes the duality pairing between $L^p(0,1)$ and $L^{p^*}(0,1).$ As shown in Examples  \ref{ex2}, \ref{ex3} and \ref{ex4}, Assumptions \textbf{\ref{assu1}}-\textbf{\ref{assu3}} also hold. Then, by Theorem \ref{main}, there exists a unique mild solution to HJ equation (\ref{eqHJ:Lp}).
\end{ex}

\section*{Acknowledgement}
The author thanks Zdzis\l aw Brze\'zniak for suggesting using Theorem 3 in Aronszajn \cite[Ch 2, Section 1]{aronszajn} in the proof of Proposition \ref{sp0}.

\bibliography{biblio}

\newcommand{\etalchar}[1]{$^{#1}$}
\begin{thebibliography}{DDH{\etalchar{+}}04}

\bibitem[Ama95]{amann}
Herbert Amann.
\newblock {\em Linear and quasilinear parabolic problems. {V}ol. {I}},
  volume~89 of {\em Monographs in Mathematics}.
\newblock Birkh\"auser Boston Inc., Boston, MA, 1995.
\newblock Abstract linear theory.

\bibitem[Aro76]{aronszajn}
N.~Aronszajn.
\newblock Differentiability of lipschitzian mappings between banach spaces.
\newblock {\em Studia Mathematica}, 57(2):147--190, 1976.

\bibitem[AT87]{AcquiTer87}
Paolo Acquistapace and Brunello Terreni.
\newblock A unified approach to abstract linear nonautonomous parabolic
  equations.
\newblock {\em Rend. Sem. Mat. Univ. Padova}, 78:47--107, 1987.

\bibitem[BCD97]{bardi}
Martino Bardi and Italo Capuzzo-Dolcetta.
\newblock {\em Optimal control and viscosity solutions of
  {H}amilton-{J}acobi-{B}ellman equations}.
\newblock Systems \& Control: Foundations \& Applications. Birkh\"auser Boston
  Inc., Boston, MA, 1997.
\newblock With appendices by Maurizio Falcone and Pierpaolo Soravia.

\bibitem[Bog98]{bogachev1}
Vladimir~I. Bogachev.
\newblock {\em Gaussian measures}, volume~62 of {\em Mathematical Surveys and
  Monographs}.
\newblock American Mathematical Society, Providence, RI, 1998.

\bibitem[Brz96]{brz96}
Zdzis{\l}aw Brze{\'z}niak.
\newblock On sobolev and besov spaces regularity of brownian paths.
\newblock {\em Stochastics and Stochastic Reports}, 56(1-2):1--15, 1996.

\bibitem[CDP91]{candp}
P.~Cannarsa and G.~Da~Prato.
\newblock Second-order hamilton–jacobi equations in infinite dimensions.
\newblock {\em SIAM Journal on Control and Optimization}, 29(2):474--492, 1991.

\bibitem[Cer01a]{cerrai3}
Sandra Cerrai.
\newblock Optimal control problems for stochastic reaction-diffusion systems
  with non-{L}ipschitz coefficients.
\newblock {\em SIAM J. Control Optim.}, 39(6):1779--1816 (electronic), 2001.

\bibitem[Cer01b]{cerrai4}
Sandra Cerrai.
\newblock {\em Second order {PDE}'s in finite and infinite dimension}, volume
  1762 of {\em Lecture Notes in Mathematics}.
\newblock Springer-Verlag, Berlin, 2001.
\newblock A probabilistic approach.

\bibitem[DDH{\etalchar{+}}04]{denketal}
Robert Denk, Giovanni Dore, Matthias Hieber, Jan Pr{\"u}ss, and Alberto Venni.
\newblock New thoughts on old results of {R}. {T}.\ {S}eeley.
\newblock {\em Math. Ann.}, 328(4):545--583, 2004.

\bibitem[DP85]{dp0}
Giuseppe Da~Prato.
\newblock Some results on bellman equation in hilbert spaces.
\newblock {\em SIAM Journal on Control and Optimization}, 23(1):61--71, 1985.

\bibitem[DPZ02]{dpz3}
G.~Da~Prato and J.~Zabczyk.
\newblock {\em Second order partial differential equations in {H}ilbert
  spaces}, volume 293 of {\em London Mathematical Society Lecture Note Series}.
\newblock Cambridge University Press, Cambridge, 2002.

\bibitem[FS06]{fs1}
Wendell~H. Fleming and H.~Mete Soner.
\newblock {\em Controlled {M}arkov processes and viscosity solutions},
  volume~25 of {\em Stochastic Modelling and Applied Probability}.
\newblock Springer, New York, second edition, 2006.

\bibitem[FT02a]{fute2}
Marco Fuhrman and Gianmario Tessitore.
\newblock The {B}ismut-{E}lworthy formula for backward {SDE}s and applications
  to nonlinear {K}olmogorov equations and control in infinite dimensional
  spaces.
\newblock {\em Stoch. Stoch. Rep.}, 74(1-2):429--464, 2002.

\bibitem[FT02b]{fute1}
Marco Fuhrman and Gianmario Tessitore.
\newblock Nonlinear {K}olmogorov equations in infinite dimensional spaces: the
  backward stochastic differential equations approach and applications to
  optimal control.
\newblock {\em Ann. Probab.}, 30(3):1397--1465, 2002.

\bibitem[FT04a]{fute4}
Marco Fuhrman and Gianmario Tessitore.
\newblock Existence of optimal stochastic controls and global solutions of
  forward-backward stochastic differential equations.
\newblock {\em SIAM J. Control Optim.}, 43(3):813--830 (electronic), 2004.

\bibitem[FT04b]{fute3}
Marco Fuhrman and Gianmario Tessitore.
\newblock Infinite horizon backward stochastic differential equations and
  elliptic equations in {H}ilbert spaces.
\newblock {\em Ann. Probab.}, 32(1B):607--660, 2004.

\bibitem[Goz95]{gozzi1}
Fausto Gozzi.
\newblock Regularity of solutions of a second order {H}amilton-{J}acobi
  equation and application to a control problem.
\newblock {\em Comm. Partial Differential Equations}, 20(5-6):775--826, 1995.

\bibitem[Goz96]{gozzi2}
Fausto Gozzi.
\newblock Global regular solutions of second order {H}amilton-{J}acobi
  equations in {H}ilbert spaces with locally {L}ipschitz nonlinearities.
\newblock {\em J. Math. Anal. Appl.}, 198(2):399--443, 1996.

\bibitem[Gro67]{gross1}
Leonard Gross.
\newblock Potential theory on {H}ilbert space.
\newblock {\em J. Functional Analysis}, 1:123--181, 1967.

\bibitem[Maa10]{maas}
Jan Maas.
\newblock Malliavin calculus and decoupling inequalities in banach spaces.
\newblock {\em Journal of Mathematical Analysis and Applications}, 363(2):383
  -- 398, 2010.

\bibitem[Mas05]{masiero1}
Federica Masiero.
\newblock Semilinear {K}olmogorov equations and applications to stochastic
  optimal control.
\newblock {\em Appl. Math. Optim.}, 51(2):201--250, 2005.

\bibitem[Mas08]{masiero4}
Federica Masiero.
\newblock Stochastic optimal control problems and parabolic equations in
  {B}anach spaces.
\newblock {\em SIAM J. Control Optim.}, 47(1):251--300, 2008.

\bibitem[Paz83]{pazy}
A.~Pazy.
\newblock {\em Semigroups of linear operators and applications to partial
  differential equations}, volume~44 of {\em Applied Mathematical Sciences}.
\newblock Springer-Verlag, New York, 1983.

\bibitem[PS93]{pruesohr2}
Jan Pr{\"u}ss and Hermann Sohr.
\newblock Imaginary powers of elliptic second order differential operators in
  {$L\sp p$}-spaces.
\newblock {\em Hiroshima Math. J.}, 23(1):161--192, 1993.

\bibitem[Tan79]{tanabe}
Hiroki Tanabe.
\newblock {\em Equations of evolution}, volume~6 of {\em Monographs and Studies
  in Mathematics}.
\newblock Pitman (Advanced Publishing Program), Boston, Mass., 1979.
\newblock Translated from the Japanese by N. Mugibayashi and H. Haneda.

\bibitem[Tri78]{triebel}
Hans Triebel.
\newblock {\em Interpolation theory, function spaces, differential operators},
  volume~18 of {\em North-Holland Mathematical Library}.
\newblock North-Holland Publishing Co., Amsterdam, 1978.

\bibitem[Ver10]{veraar1}
Mark~C. Veraar.
\newblock Non-autonomous stochastic evolution equations and applications to
  stochastic partial differential equations.
\newblock {\em J. Evol. Equ.}, 10(1):85--127, 2010.

\bibitem[vN98]{vn1}
J.~M. A.~M. van Neerven.
\newblock Nonsymmetric {O}rnstein-{U}hlenbeck semigroups in {B}anach spaces.
\newblock {\em J. Funct. Anal.}, 155(2):495--535, 1998.

\bibitem[vN01]{vn2}
J.~M. A.~M. van Neerven.
\newblock Uniqueness of invariant measures for the stochastic {C}auchy problem
  in {B}anach spaces.
\newblock In {\em Recent advances in operator theory and related topics
  ({S}zeged, 1999)}, volume 127 of {\em Oper. Theory Adv. Appl.}, pages
  491--517. Birkh\"auser, Basel, 2001.

\bibitem[vN08]{vn0}
J.~M. A.~M. van Neerven.
\newblock Stochastic evolution equations.
\newblock ISEM Lecture Notes, 2008.

\bibitem[vNW05a]{vnw1}
J.~M. A.~M. van Neerven and L.~Weis.
\newblock Stochastic integration of functions with values in a {B}anach space.
\newblock {\em Studia Math.}, 166(2):131--170, 2005.

\bibitem[vNW05b]{vnw2}
J.~M. A.~M. van Neerven and L.~Weis.
\newblock Weak limits and integrals of {G}aussian covariances in {B}anach
  spaces.
\newblock {\em Probab. Math. Statist.}, 25(1, Acta Univ. Wratislav. No. 2784),
  2005.

\bibitem[VZ08]{veraarzimmer1}
Mark Veraar and Jan Zimmerschied.
\newblock Non-autonomous stochastic {C}auchy problems in {B}anach spaces.
\newblock {\em Studia Math.}, 185(1):1--34, 2008.

\bibitem[Yag91]{yagi}
Atsushi Yagi.
\newblock Abstract quasilinear evolution equations of parabolic type in
  {B}anach spaces.
\newblock {\em Boll. Un. Mat. Ital. B (7)}, 5(2):341--368, 1991.

\bibitem[Zab99]{zabc1}
J.~Zabczyk.
\newblock Parabolic equations on {H}ilbert spaces.
\newblock In {\em Stochastic PDE's and Kolmogorov equations in infinite
  dimensions (Cetraro, 1998)}, volume 1715 of {\em Lecture Notes in Math.},
  pages 117--213. Springer, Berlin, 1999.

\end{thebibliography}
\end{document}